\newtheorem{theorem}{Theorem}[section]
\newtheorem{lemma}[theorem]{Lemma}
\newtheorem{proposition}{Proposition}
\theoremstyle{definition}
\newtheorem{definition}[theorem]{Definition}
\newtheorem{remark}{Remark}
\def\N{\mathbb{N}}
\def\cal{\mathcal}
\def\R{\mathbb{R}}
\def\calA{\mathcal{A}}
\def\calE{{\mathcal{E}}}
\def\barcalE{{\overline{\mathcal{E}}}}
\def\2toro{\mathbb{T}^2}
\def\trasl2toro{\overline{\mathbb{T}}^2}
\def\dmint{\tilde{d}_{min}}
\def\ov{\overline}
\begin{document}

\begin{center}\LARGE
  The evolution of the orbit distance in the
  double averaged restricted 3-body problem\\
  with crossing singularities
\end{center}
\bigskip

\centerline{\bf\large Giovanni F. Gronchi$^\dagger$, Chiara
  Tardioli$^{\dagger\ddagger}$}
\bigskip 
\centerline{\large $^\dagger$Dipartimento di Matematica, Universit\`a
  di Pisa (Italy)}
\medskip
\centerline{$^\ddagger$D\'epartement de Math\'ematique (naXys), FUNDP (Belgium)}
\bigskip 
\centerline{\large\texttt{gronchi@dm.unipi.it}}%
\medskip
\centerline{\large\texttt{tardioli@mail.dm.unipi.it}}

\bigskip\bigskip

%The abstract of your paper
\begin{abstract}
  We study the long term evolution of the distance between two Keplerian
  confocal trajectories in the framework of the averaged restricted 3-body
  problem. The bodies may represent the Sun, a solar system planet and an
  asteroid.  The secular evolution of the orbital elements of the
  asteroid is computed by averaging the equations of motion over the mean
  anomalies of the asteroid and the planet.  When an orbit crossing with the
  planet occurs the averaged equations become singular. However, it is
  possible to define piecewise differentiable solutions by extending the
  averaged vector field beyond the singularity from both sides of the orbit
  crossing set
%and to compute an explicit formula for the difference of the
%  extended vector fields at planet crossings
  \cite{GM98}, \cite{gronchi_celmecIII}.  In this paper we improve the
  previous results, concerning in particular the singularity extraction
  technique, and show that the extended vector fields are
  Lipschitz-continuous.  Moreover, we consider the distance between the
  Keplerian trajectories of the small body and of the planet.  Apart from
  exceptional cases, we can select a sign for this distance so that it becomes
  an analytic map of the orbital elements near to crossing configurations
\cite{gronchi_tommei}.  We
  prove that the evolution of the `signed' distance along the averaged vector
  field is more regular than that of the elements in a neighborhood of
  crossing times.  A comparison between averaged and non-averaged evolutions
  and an application of these results are shown using orbits of near-Earth
  asteroids.
\end{abstract}

% =================================================
\section{Introduction}

The distance between the trajectories of an asteroid (orbiting around
the Sun) and our planet gives a first indication in the search for
possible Earth impactors.  We call it orbit distance and denote it by
$d_{min}$.\footnote{It is often called MOID (Minimum Orbit
  Intersection Distance, \cite{bowell}) by the astronomers.} A
necessary condition to have a very close approach or an impact with
the Earth is that $d_{min}$ is small. Provided close
approaches with the planets are avoided, the perturbations caused by the
Earth make the asteroid trajectory change slowly with time.  Moreover,
the perturbations of the other planets produce small changes in both
trajectories. The value of the semimajor axis of both is kept constant
up to the first order in the small parameters (the ratio of the mass
of each perturbing planet to the mass of the Sun).  All these effects
are responsible of a variation of $d_{min}$.
We can study the evolution of the asteroid
in the framework of the restricted 3-body
problem: Sun, planet, asteroid. Then it is easy to include more than
one perturbing planet in the
model, in fact the potential energy can be written as sum of terms each
depending on one planet only.

If the asteroid has a close encounter with some planet, the perturbation of the latter
generically produces a change in the semimajor axis of the asteroid.
This can be estimated,
and depends on the mass of the planet, the unperturbed planetocentric
velocity of the small body and the impact parameter, see \cite{resret}.

The orbits of near-Earth asteroids (NEAs, i.e. with perihelion distance $\leq
1.3$ au)\footnote{1 au (astronomical unit) $\approx $ 149,597,870 Km} are
chaotic, with short Lyapounov times (see~\cite{whipple95}), at most a few
decades.  After that period has elapsed, an orbit computed by numerical
integration and the true orbit of the asteroid are practically unrelated and
we can not make reliable predictions on the position of the asteroid.  For
this reason the averaging principle is applied to the equations of motion: it
gives the average of the possible evolutions, which is useful in a statistical
sense.  However, the dynamical evolution often forces the trajectory of a NEA
to cross that of the Earth. This produces a singularity in the averaged
equations, where we take into account every possible position on the
trajectories, including the collision configurations.

The problem of {\em averaging on planet crossing orbits} has been studied in
\cite{GM98} for planets on circular coplanar orbits and then generalized in
\cite{gronchi_celmecIII} including nonzero eccentricities and inclinations of
the planets.
The work in \cite{GM98} has been used to define proper elements for NEAs,
that are integrals of an approximated problem, see \cite{GM01}.
In this paper we compute the main singular term
by developing the distance between two points, one on
the orbit of the Earth and the other on that of the asteroid, at its minimum
points.  This choice improves the results in \cite{GM98},
\cite{gronchi_celmecIII},
where a development at the mutual nodes was used, because it avoids the
artificial singularity occurring for vanishing mutual inclination of the two
orbits.  Moreover, we show that the averaged vector field admits two
Lipschitz-continuous extensions from both sides of the orbit crossing set (see
Theorem~\ref{der_jumps}), which is useful for the numerical computation
of the solutions.

The orbit distance $d_{min}$ is a singular function of the (osculating)
orbital elements when the trajectories of the Earth and the asteroid
intersect. However, by suitably choosing a sign for $d_{min}$ we obtain a
map, denoted by $\tilde{d}_{min}$, which is analytic in a neighborhood of most
crossing configurations (see \cite{gronchi_tommei}).

Here we prove that, near to crossing configurations, the averaged evolution of
$\tilde{d}_{min}$ is more regular than the averaged evolution of the elements,
which are piecewise differentiable functions of time.
%This result is useful for the applications to impact monitoring.

The paper is organized as follows.  Section~\ref{s:orbdist} contains
some preliminary results on the orbit distance.
In Sections~\ref{s:avereq},
\ref{s:extract}, \ref{s:gensol} we introduce the averaged equations,
present the results on the singularity extraction method and give the
definition of the generalized solutions, which go
beyond crossing singularities.  In Section~\ref{s:dminevol} we prove the
regularity of the secular evolution of the orbit
distance. Section~\ref{s:numexp} is devoted to numerical experiments: we
describe the algorithm for the computation of the generalized solutions and
compare the averaged evolution with the solutions of the full equations of
motion.  We also show how this theory can be applied to estimate
Earth crossing times for NEAs.

% ============================================================
\section{The orbit distance}
\label{s:orbdist}
%\section{The minimal distance maps}

Let $(E_j,v_j)$, $j=1,2$ be two sets of orbital elements of two
celestial bodies on confocal Keplerian orbits.  Here $E_j$ describes
the trajectory of the orbit and $v_j$ is a parameter along the trajectory,
e.g. the true anomaly.  We denote by $\calE=(E_1,E_2)$ the two-orbit
configuration, moreover we set $V=(v_1,v_2)$.  In this paper we
consider bounded trajectories only.

Choose a reference frame, with origin in the common focus, and write
$\mathcal{X}_j = \mathcal{X}_j(E_j,v_j)$, $j=1,2$ for the Cartesian
coordinates of the two bodies.

For a given two-orbit configuration $\calE$, we introduce the {\em Keplerian
  distance function} $d$, defined by
\[%begin{equation}
\2toro\ni V \mapsto d(\calE,V)= |{\mathcal X}_1-{\mathcal X}_2|\,,
%\label{kepdistfunc}
\]%end{equation}
where $\2toro$ is the two-dimensional torus and $|\cdot|$ is the Euclidean
norm.

The local minimum points of $d$ can be found by computing all the critical
points of $d^2$. For this purpose in \cite{gronchi02}, \cite{gronchi05},
\cite{kholvass}, \cite{balukhol} the authors have used methods of
computational algebra, such as resultants and Gr\"obner's bases,
which allow us to compute efficiently all the solutions.

Apart from the case of two concentric coplanar circles, or two
overlapping ellipses, the function $d^2$ has finitely many stationary
points. There exist configurations attaining 4 local minima of $d^2$:
this is thought to be the maximum possible, but a proof is not known
yet. A simple computation shows that, for non-overlapping
trajectories, the number of crossing points is at most two,
see~\cite{gronchi05}.

Let $V_h=V_h(\calE)$ be a local minimum point of $V \mapsto
d^2(\calE,V)$. We consider the maps
\[%begin{equation}
\calE \mapsto d_h(\calE) = d(\calE,V_h)\,,
\hskip 1cm
\calE \mapsto d_{min}(\calE) = \min_h d_h(\calE)\ .
%\label{mindistmap}
\]%end{equation}
For each choice of the two-orbit configuration $\calE$, $d_{min}(\calE)$ gives
the orbit distance.

The maps $d_h$ and $d_{min}$ are singular at crossing configurations, and
their derivatives do not exist.
 We can deal with this singularity and obtain
analytic maps in a neighborhood of a crossing configuration $\calE_c$ by
properly choosing a sign for these maps.
We note that $d_h$, $d_{min}$ can present singularities without orbit
crossings.  The maps $d_h$ can have bifurcation singularities, so that the
number of minimum points of $d$ may change. Therefore the maps $d_h$,
$d_{min}$ are defined only locally.  We say that a configuration $\calE$ is
non-degenerate if all the critical points of the Keplerian distance function
are non-degenerate.  If $\calE$ is non-degenerate, there exists a neighborhood
${\cal W}$ of $\calE\in\R^{10}$ such that the maps $d_h$, restricted to ${\cal
  W}$, do not have bifurcations.  On the other hand, the map $d_{min}$ can
lose regularity when two local minima exchange their role as absolute minimum.
There are no additional singularities apart from those mentioned above.  The
behavior of the maps $d_h$, $d_{min}$ has been investigated in
\cite{gronchi_tommei}.  However, a detailed analysis of the occurrence of
bifurcations of stationary points and exchange of minima is still lacking.

Here we summarize the procedure to deal with the crossing
singularity of $d_h$; the procedure for $d_{min}$ is the same.
We consider the points on the two orbits corresponding
to the local minimum points $V_h = (v_1^{(h)}, v_2^{(h)})$ of $d^2$:
\[
\mathcal{X}_1^{(h)} = \mathcal{X}_1(E_1,v_1^{(h)})\,;\hskip 1cm
\mathcal{X}_2^{(h)} = \mathcal{X}_2(E_2,v_2^{(h)})\ .
\]
We introduce the vectors tangent to the trajectories $E_1, E_2$ at these
points
\[
\tau_1^{(h)} = \frac{\partial {\mathcal X}_1}{\partial v_1}(E_1,v_1^{(h)})\,,
\hskip 1cm
\tau_2^{(h)} = \frac{\partial {\mathcal X}_2}{\partial v_2}(E_2,v_2^{(h)})\,,
\]
and their cross product
\[
\tau_3^{(h)} = \tau_1^{(h)}\times\tau_2^{(h)}\ .
\]
We also define
\[
\Delta = \mathcal{X}_1 - \mathcal{X}_2\,,
\hskip 1cm
\Delta_h = \mathcal{X}_1^{(h)} - \mathcal{X}_2^{(h)}\ .
\]
The vector $\Delta_h$ joins the points attaining a local minimum of $d^2$ and
$|\Delta_{h}|=d_h$.

\begin{figure}[t]
\psfragscanon
\psfrag{t1}{$\tau_1^{(h)}$}\psfrag{t2}{$\tau_2^{(h)}$}\psfrag{t3}{$\tau_3^{(h)}$}
\psfrag{Dmin}{$\Delta_h$}
\centerline{\epsfig{figure=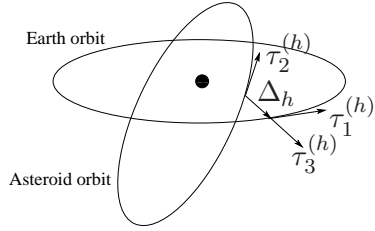,width=4.5cm}}
\psfragscanoff
\caption{Geometric properties of the critical points of $d^2$ and regularization
  rule.}
\label{regrule}
\end{figure}
From the definition of critical points of $d^2$ both the vectors
$\tau_1^{(h)}$, $\tau_2^{(h)}$ are orthogonal to $\Delta_h$, so that
$\tau_3^{(h)}$ and $\Delta_h$
are parallel, see Figure~\ref{regrule}.  Denoting by $\hat{\tau}_3^{(h)}$,
$\hat{\Delta}_h$ the corresponding unit vectors and by a dot the Euclidean
scalar product,
the distance with sign
\begin{equation}
\tilde{d}_h = \bigl(\hat{\tau}_3^{(h)}\cdot\hat{\Delta}_h\bigr)\, d_h
\label{choosesign}
\end{equation}
is an analytic function in a neighborhood of most crossing configurations.
Indeed, this smoothing procedure fails at crossing configurations such that $\tau_1^{(h)}$, $\tau_2^{(h)}$ are parallel.  A
detailed proof can be found in \cite{gronchi_tommei}.  Note that, to obtain
regularity in a neighborhood of a crossing configuration, we lose continuity
at the configurations with $\tau_1^{(h)}\times\tau_2^{(h)}=0$ and
$d_h\neq 0$.

The derivatives of $\tilde{d}_h$ with respect to each component $\calE_k$,
$k=1\ldots 10$ of $\calE$ are given by
\begin{equation}
\frac{\partial \tilde{d}_h}{\partial \calE_k} =
\hat{\tau}_3^{(h)}\cdot \frac{\partial \Delta}{\partial \calE_k} (\calE,
V_h)\ .
\label{derdhtilde}
\end{equation}

We shall call (signed) orbit distance the map $\tilde{d}_{min}$.

% ============================================================
%\section{Secular evolution of planet crossing orbits}
%\label{s:secevol}

% ============================================================
\section{Averaged equations} % for the restricted 3-body problem}
\label{s:avereq}

Let us consider a restricted 3-body problem with the Sun, the Earth and an
asteroid.  The motion of the 2-body system Sun-Earth is a known function of
time.
%and the problem consist in the description of the motion of the asteroid.
We denote by $\mathcal{X}, \mathcal{X}'\in \R^3$ the heliocentric position of
the asteroid and the planet respectively. The equations of motion for the
asteroid are
\begin{equation}
\ddot{\mathcal{X}} = -k^2\frac{\mathcal{X}}{|\mathcal{X}|^3} + \mu k^2\Bigl[
\frac{\mathcal{X}'-\mathcal{X}}{|\mathcal{X}'-\mathcal{X}|^3} -
\frac{\mathcal{X}'}{|\mathcal{X}|^3}\Bigr]\,,
\label{restr3b}
\end{equation}
where $k$ is Gauss' constant and $\mu$ is a small parameter representing the
ratio of the Earth mass to the mass of the Sun.

We study the motion using Delaunay's elements ${\cal Y} =
(L,G,Z,\ell,g,z)$, defined by
\begin{displaymath}
\begin{array}{l}
        L = k\sqrt{a}\,,              \cr
        G = k\sqrt{a(1-e^2)}\,,       \cr
        Z = k\sqrt{a(1-e^2)}\cos I\,, \cr
\end{array}
\hskip 2cm
\begin{array}{l}
        \ell = n (t-t_0)\,,\cr
        g = \omega\,,      \cr
        z = \Omega\,,      \cr
\end{array}
\end{displaymath}
where $(a,e,I,\omega,\Omega,\ell)$ are Keplerian elements, $n$ is the mean
motion and $t_0$ is the time of passage at perihelion.  Delaunay's elements of
the Earth are denoted by $(L',G',Z',\ell',g',z')$.  We write $\calE = (E,E')$
for the two-orbit configuration, where $E,E'$ are Delaunay's elements of
the asteroid and the Earth respectively.
Using the canonical variables ${\cal Y}$, equations \eqref{restr3b}
can be written in Hamiltonian form as
\begin{equation}
  \dot{\cal Y}  = \mathbb{J}_3\,\nabla_{\cal Y} H\,,
\label{Heqs}
\end{equation}
where we use
\begin{displaymath}
\mathbb{J}_n =
\left[\begin{array}{cc}
\ {\cal O}_n   &-{\cal I}_n \\
{\cal I}_n &{\cal O}_n
\end{array}\right]\,,
\hskip 1cm n\in\N\,,
\end{displaymath}
for the symplectic identity matrix of order $2n$. The Hamiltonian
\[
H = H_0 - R
\]
is the difference of the two-body (asteroid, Sun) Hamiltonian
\[
H_0 = -\frac{k^4}{2L^2}
\]
and the perturbing function
\[
R = \mu k^2\biggl(\frac{1}{|{\cal X}-{\cal X}'|} -
\frac{{\cal X}\cdot{\cal X}'}{|{\cal X'}|^3}\biggr)\,,
\]
with ${\cal X}, {\cal X}'$ considered as functions of ${\cal Y}, {\cal Y}'$.

The function $R$ is the sum of two terms: the first is the
direct part of the perturbation, due to the attraction of the Earth
and singular at collisions with it. The second is called indirect
perturbation, and is due to the attraction of the Sun on the Earth.

We can reduce the number of degrees of freedom of \eqref{Heqs} by averaging
over the fast angular variables $\ell,\ell'$, which are the mean anomalies of
the asteroid and the Earth.  As a consequence, $\ell$
becomes a cyclic variable, so that the semimajor axis $a$
is constant in this simplified
dynamics.  For a full account on averaging methods in Celestial Mechanics see
\cite{AKN}.

The averaged equations of motion for the asteroid are given by
\begin{equation}
  \dot{\ov Y}  = -\mathbb{J}_2\,{{\overline{\nabla_Y R}\,}}\,,
\label{averrhs}
\end{equation}
where $Y = (G,Z,g,z)^t$, ${\ov Y} = ({\ov G},{\ov Z},
{\ov g},{\ov z})^t$ are some of Delaunay's elements, and
\begin{displaymath}
\overline{\nabla_Y R} = \frac{1}{(2\pi)^2}
\int_{\2toro} \nabla_Y R\,d\ell\,d\ell'\,,
\end{displaymath}
with $\2toro=\{(\ell,\ell'): -\pi\leq\ell\leq\pi, -\pi\leq\ell'\leq\pi\}$,
is the vector of the averaged partial derivatives of the perturbing
function $R$.
Equation~\eqref{averrhs} corresponds to the scalar equations
\[
\dot{\ov G} = {\displaystyle\frac{\ov{\partial R}}{\partial g}} \,,
\qquad
\dot{\ov Z} = {\displaystyle\frac{\ov{\partial R}}{\partial z}} \,,
\qquad
\dot{\ov g} = -{\displaystyle\frac{\ov{\partial R}}{\partial G}} \,,
\qquad
\dot{\ov z} = -{\displaystyle\frac{\ov{\partial R}}{\partial Z}}\ .
\]

We can easily include more planets in the model. In this case the
perturbing function is sum of terms $R_i$, each depending on the coordinates
of the asteroid and the planet $i$ only, with a small parameter $\mu_i$,
representing the ratio of the mass of planet $i$ to the mass of the Sun.

Note that, if there are mean motion resonances of low order with the planets,
then the solutions
of the averaged equations \eqref{averrhs} may be not representative of the
behavior of the corresponding components in the solutions of \eqref{Heqs}.

Moreover, when the planets are assumed to move on circular coplanar orbits we
obtain an integrable problem. In fact the semimajor axis $a$, the
component $Z$ of the angular momentum orthogonal to the invariable
plane\footnote{Here we mean the common plane of the planetary trajectories.}
and the averaged Hamiltonian ${\ov H}$ are first integrals
generically independent and in involution (i.e. with vanishing
Poisson's brackets).
Taking into account the eccentricity and the inclination of the planets
the problem is not integrable any more.

In \cite{kozai62} the secular evolution of high eccentricity and inclination
asteroids is studied in a restricted 3-body problem, with Jupiter on a
circular orbit. Nevertheless, crossings with the perturbing planet are excluded in that
work.  In \cite{lidov} there is a similar secular theory for a satellite of
the Earth. The dynamical behavior described in
\cite{kozai62}, \cite{lidov} is usually called {\em Lidov-Kozai mechanism}
in the literature and
an explicit solution to the related equations is given in \cite{kinonakai07}.

If no orbit crossing occurs, by the theorem of differentiation under the
integral sign the averaged equations of motion (\ref{averrhs}) are equal to
Hamilton's equations
\begin{equation}
  \dot{\ov Y} = -\mathbb{J}_2\, \nabla_Y \overline{R}
\label{rhsRbar}
\end{equation}
where
\[
{\ov R} = \frac{1}{(2\pi)^2} \int_{\2toro} R\,d\ell\,d\ell' =
  \frac{\mu k^2}{(2\pi)^2}\int_{\2toro} \frac{1}{|{\cal X}-{\cal
      X}'|}\,d\ell\,d\ell'
\]
is the averaged perturbing function. The average of the indirect term of $R$
is zero.

When the orbit of the asteroid crosses that of the Earth a singularity
appears in \eqref{averrhs}, corresponding to the existence of a collision
for particular values of the mean anomalies.
We study this singularity to define generalized solutions
of \eqref{averrhs} which go beyond planet crossings. Since the
semimajor axis of the asteroid is constant in the averaged dynamics, we expect
that the generalized solutions can be reliable only if there are no close
approaches with the planet in the dynamics of equations~\eqref{Heqs}.

% ====================================================
\section{Extraction of the singularity}
\label{s:extract}

In the following we denote by $\calE_c$ a non-degenerate crossing
configuration with only one crossing point, and we choose the minimum point
index $h$ such that $d_h(\calE_c) = 0$. For each $\calE$ in a neighborhood of
$\cal E_c$ we consider Taylor's development of $V\mapsto d^2({\cal E},V)$,
$V=(\ell,\ell')^t$, in a neighborhood of the local minimum point $V_h=V_h(\calE)$:
\begin{equation}
d^2({\cal E},V) = d_h^2({\cal E}) + \frac{1}{2}(V-V_h)\cdot{\cal
  H}_h({\cal E})(V-V_h) + {\cal R}_3^{(h)}({\cal E},V)\,,
\label{taylor_d2}
\end{equation}
where
\[
{\cal H}_h({\cal E}) = \frac{\partial^2 d^2}{\partial
  V^2}(\calE,V_h(\calE))
\]
is the Hessian matrix of $d^2$ in $V_h=(\ell_h,\ell_h')^t$, and
\begin{eqnarray}
{\cal R}_3^{(h)}({\cal E},V) &=&\sum_{|\alpha|=3} r_\alpha^{(h)}({\cal
  E},V)(V-V_h)^\alpha\,, \label{intform}\\
r_\alpha^{(h)}({\cal E},V) &=& \frac{3}{\alpha!}
\int_0^1(1-t)^2 D^\alpha d^2(\calE,V_h+t(V-V_h))\,dt \label{r_alpha}
\end{eqnarray}
is Taylor's remainder in the integral form.\footnote{In
  \eqref{intform}, \eqref{r_alpha}
  $\alpha=(\alpha_1,\alpha_2)\in(\N\cup\{0\})^2$ is a multi-index,
  hence
\[
|\alpha| = \alpha_1+\alpha_2\,,
\hskip 0.5cm
\alpha! = \alpha_1!\alpha_2!\,,
\hskip 0.5cm
V^\alpha = v_1^{\alpha_1} v_2^{\alpha_2}\,,
\hskip 0.5cm
D^\alpha f = \frac{\partial^{|\alpha|} f}
{\partial v_1^{\alpha_1}\partial v_2^{\alpha_2}}\,,
\]
for a vector $V=(v_1,v_2)$ and a smooth function $V\mapsto f(V)$.}
We introduce the approximated distance
\begin{equation}
\delta_h = \sqrt{d_h^2 + (V-V_h)\cdot {\cal A}_h(V-V_h)}\,,
\label{approxdist}
\end{equation}
where
\[
\calA_h = \frac{1}{2}{\cal H}_h = \left[
\begin{array}{cc}
|\tau_h|^2+ \displaystyle\frac{\partial^2\mathcal{X}}{\partial \ell^2}(E,\ell_h)\cdot\Delta_h
                   &-\tau_h\cdot\tau_h' \cr
                   &\cr
-\tau_h\cdot\tau_h' &|\tau_h'|^2 -\displaystyle \frac{\partial^2\mathcal{X}'}{\partial \ell'^2}(E',\ell_h')\cdot\Delta_h \cr
\end{array}
\right],
\]
and
\[
\Delta_h = \Delta_h(\calE)\,,
\qquad
\tau_h = \frac{\partial {\mathcal X}}{\partial \ell}(E,\ell_h)\,,
\quad
\tau_h' = \frac{\partial {\mathcal X}'}{\partial \ell'}(E',\ell_h')\ .
\]

\begin{remark} If the matrix ${\cal A}_h$ is non-degenerate, then it is
  positive definite because $V_h$ is a minimum point, and this property holds
  in a suitably chosen neighborhood ${\cal W}$ of $\calE_c$.  At a
  crossing configuration $\calE=\calE_c$ the matrix ${\cal A}_h$ is degenerate
  if and only if the tangent vectors $\tau_h$, $\tau_h'$ are parallel
  (see~\cite{gronchi_tommei}):
\[
\det{\cal A}_h(\calE_c)=0
\quad \Longleftrightarrow \quad
\tau_h(\calE_c)\times \tau_h'(\calE_c) = 0\ .
\]
\end{remark}

First we estimate the remainder function $1/d - 1/\delta_h$.
% is uniformly bounded
%in a neighborhood of the non-degenerate crossing configuration $\calE_c$.
To this aim we need the following:
\begin{lemma}
  There exist positive constants $C_1$, $C_2$ and a neighborhood ${\cal U}$ of
  $(\calE_c, \\
  V_h(\calE_c))$ such that
\begin{equation}
C_1 \delta_h^2 \leq d^2 \leq C_2 \delta_h^2
\label{d2_est}
\end{equation}
holds for $(\cal E, V)$ in ${\cal U}$. Moreover, there exist positive
constants $C_3$, $C_4$ and a neighborhood ${\cal W}$ of $\calE_c$ such that
\begin{equation}
d_h^2 + C_3\vert V-V_h\vert^2 \leq \delta_h^2 \leq
d_h^2 + C_4\vert V-V_h\vert^2
\label{norm_equiv}
\end{equation}
holds for $\calE$ in ${\cal W}$ and for every $V\in\2toro$.
\label{inequalities}
\end{lemma}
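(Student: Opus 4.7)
The plan is to establish the two-sided bound (\ref{norm_equiv}) first, since it is purely about the positive-definite quadratic form appearing in $\delta_h^2$, and then use it together with the Taylor remainder in (\ref{taylor_d2}) to derive (\ref{d2_est}).

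For (\ref{norm_equiv}): at the crossing configuration $\calE_c$, the matrix $\calA_h(\calE_c)=\tfrac{1}{2}\mathcal{H}_h(\calE_c)$ is symmetric and, by non-degeneracy of $\calE_c$ together with the fact that $V_h(\calE_c)$ is a minimum point of $V\mapsto d^2(\calE_c,V)$, it is positive definite. The implicit function theorem applied to $\nabla_V d^2=0$ gives a smooth branch $\calE\mapsto V_h(\calE)$ in a neighborhood of $\calE_c$, so $\calE\mapsto\calA_h(\calE)$ is continuous. Its eigenvalues therefore remain bounded between two positive constants $C_3$ and $C_4$ on a sufficiently small neighborhood $\mathcal{W}$ of $\calE_c$. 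Since $\calA_h$ does not depend on $V$, one gets
\[
C_3|V-V_h|^2 \le (V-V_h)\cdot \calA_h(V-V_h) \le C_4|V-V_h|^2
\]
for every $V\in\2toro$, and adding $d_h^2$ to each side yields (\ref{norm_equiv}).

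For (\ref{d2_est}): comparing (\ref{taylor_d2}) with (\ref{approxdist}) gives
\[
d^2(\calE,V)-\delta_h^2(\calE,V)=\mathcal{R}_3^{(h)}(\calE,V).
\]
Since $d^2$ is smooth, its third partial derivatives with respect to $V$ are uniformly bounded on a compact neighborhood of $(\calE_c,V_h(\calE_c))$; from the integral form (\ref{r_alpha}) one obtains $|\mathcal{R}_3^{(h)}(\calE,V)|\le M|V-V_h|^3$ for some $M>0$. Combining this estimate with (\ref{norm_equiv}) one has
\[
|d^2-\delta_h^2|\;\le\; M|V-V_h|\cdot |V-V_h|^2 \;\le\; \frac{M}{C_3}\,|V-V_h|\,\delta_h^2.
\]
Shrinking $\mathcal{W}$ if necessary and choosing the neighborhood $\mathcal{U}$ of $(\calE_c,V_h(\calE_c))$ so that $|V-V_h|\le C_3/(2M)$ on $\mathcal{U}$, one deduces $|d^2-\delta_h^2|\le\tfrac{1}{2}\delta_h^2$, which gives (\ref{d2_est}) with, e.g., $C_1=1/2$ and $C_2=3/2$.

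The only delicate point is the existence and continuity of the local-minimum branch $V_h(\calE)$, which must be controlled before the Hessian $\calA_h$ can be said to depend continuously on $\calE$; this is guaranteed by the non-degeneracy hypothesis on $\calE_c$ via the implicit function theorem. Once this is in hand, the remaining argument is a matter of tracking constants through nested shrinkings of the neighborhoods $\mathcal{W}$ and $\mathcal{U}$, and should pose no further difficulty.
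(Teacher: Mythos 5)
Your proof is correct and follows essentially the same route as the paper's: continuity and positive-definiteness of $\calA_h$ near the non-degenerate configuration $\calE_c$ give \eqref{norm_equiv}, and the cubic bound $|{\cal R}_3^{(h)}|\leq M|V-V_h|^3$ from the integral remainder, absorbed into $\delta_h^2$ on a small enough neighborhood, gives \eqref{d2_est}. You merely reverse the order of the two parts and spell out the absorption step (choosing $|V-V_h|\leq C_3/(2M)$) that the paper leaves implicit.
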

\begin{proof}
  From \eqref{intform}, \eqref{r_alpha} we obtain the existence of a
  neighborhood ${\cal U}$ of $(\calE_c,V_h(\calE_c))$ and a constant $C_5>0$
  such that
\begin{equation}
|{\cal R}_3^{(h)}(\calE,V)| \leq
\sum_{|\alpha|=3}|r_\alpha^{(h)}(\calE,V)||V-V_h|^\alpha
\leq C_5|V-V_h|^3\ .
%\leq 4\max_{|\alpha|=3}|r_\alpha^{(h)}(\calE,V)||V-V_h|^3\ .
\label{calR3}
\end{equation}
%Moreover there exists a neighborhood ${\cal U}$ of $(\calE_c,V_h(\calE_c))$
%such that
%\begin{equation}
%|r_\alpha^{(h)}(\calE,V)| \leq
%\frac{3}{\alpha!}\sup_{(\calE,V)\in{\cal U}}|D^\alpha
%d^2(\calE,V)| < +\infty
%\label{stima_di_K}
%\end{equation}
%holds in ${\cal U}$.
We select ${\cal U}$ so that
no bifurcations of stationary points of $d^2$ occur and
there exists a
constant $C_6>0$ with $d_k(\calE)\geq C_6, k\neq h$ for each
$(\calE,V)\in{\cal U}$.  Relation \eqref{calR3} together with
\eqref{taylor_d2},\eqref{approxdist} yield \eqref{d2_est} for some $C_1, C_2>0$.

Moreover, we can find a neighborhood ${\cal W}$ of $\calE_c$ such that
there are no bifurcations of stationary points of $d^2$, and
the
inequalities \eqref{norm_equiv} hold for some $C_3, C_4>0$: in fact ${\cal
  A}_h$ depends continuously on $\calE$ and ${\cal A}_h(\calE_c)$ is positive
definite.
\end{proof}

%The inequalities \eqref{norm_equiv} for some $C_3, C_4>0$ follow from the
%positive definiteness of ${\cal A}_h$, from \eqref{approxdist} and the
%continuous dependence of ${\cal A}_h$ on $\cal E$.

%the fact that the eigenvalues
%$\lambda_1^{(h)}, \lambda_2^{(h)}$ of ${\cal A}_h$ depend continuously on
%$\cal E$.  In fact we can find a neighborhood ${\cal W}$ of $\calE_c$ such
%that, setting
%\[
%C_3 = \min_{j=1,2}\inf_{\calE\in{\cal
%    W}}\lambda_j^{(h)}(\calE)\,, \hskip 1cm C_4 =
%\max_{j=1,2}\sup_{\calE\in{\cal W}}\lambda_j^{(h)}(\calE)\,,
%\]
%we have $0<C_3 \leq C_4 < +\infty$.

%\begin{remark}
%Since $C_3>0$,
%We select the set ${\cal W}$ small enough, so that it does not contain
%degenerate configurations, thus in ${\cal W}$ no bifurcation occurs.

%Indeed,
%assuming ${\cal E}_c$ gives rise the only crossing point $V_h$, we only need
%that ${\cal A}_h$ is positive definite in ${\cal W}$.
%\end{remark}

\begin{figure}
\psfragscanon
\psfrag{calE}{${\cal E}$}\psfrag{cW}{${\cal W}$}\psfrag{S}{$\Sigma$}
\psfrag{V}{$V$}\psfrag{cV}{${\cal V}$}
\psfrag{(E,Vh)}{$\;\Gamma_h$}
\psfrag{(E,Vk)}{$\Gamma_k$}
\psfrag{}{}
\centerline{\epsfig{figure=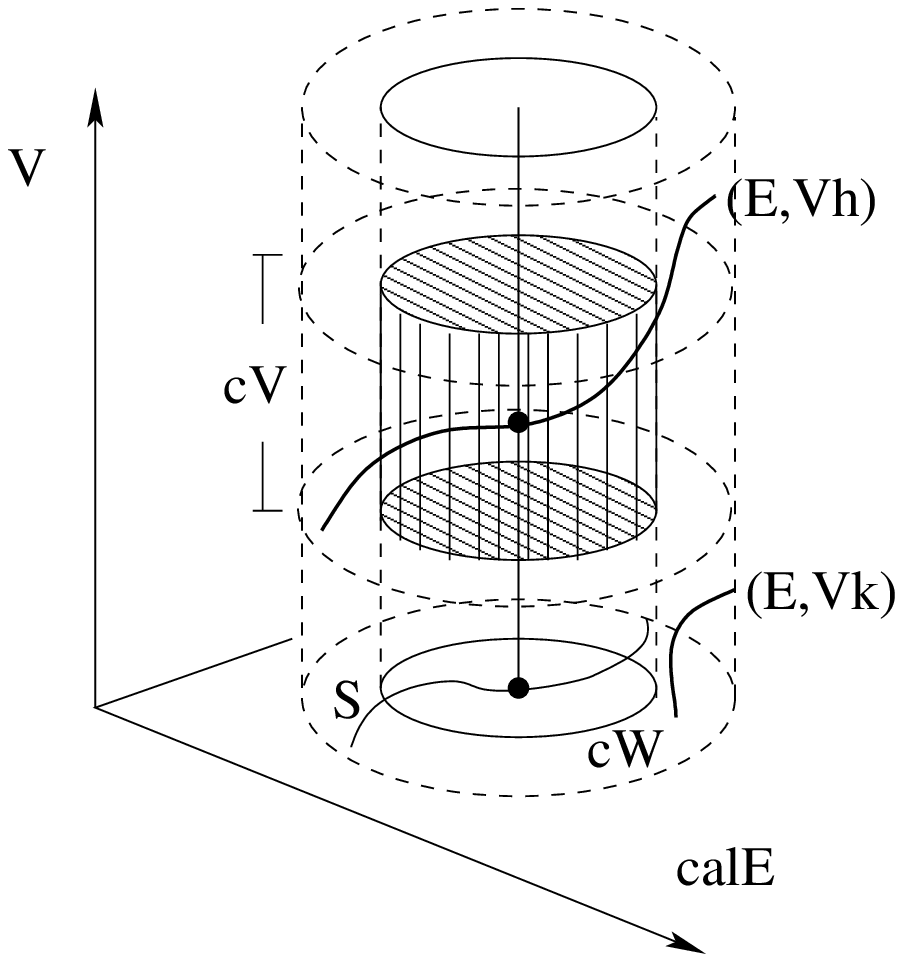,width=5cm}}
\psfragscanoff
\caption{Sketch for the selection of the neighborhood ${\cal U} = {\cal
    W}\times{\cal V}$. Here $\Gamma_j = \{(\calE,V_j(\calE)):d_j(\calE) = 0\}$
for $j=h,k$. In this case we restrict ${\cal W}$ to a smaller set (the inner
circle), as explained in the proof of Proposition~\ref{diff_bounded}.}
\label{chooseW}
\end{figure}

\begin{proposition}
  There exist $C>0$ and a neighborhood ${\cal W}$ of $\calE_c$
  such that
\[
\biggl|\frac{1}{d} - \frac{1}{\delta_h}\biggr| \le C
\hskip 1cm
 \forall\,(\calE, V) \in
({\cal W}\times\2toro)\setminus{\cal U}_\Sigma\,,
\]
where ${\cal U}_\Sigma = \{(\calE,V_h(\calE)): \calE\in\Sigma\}$ with $\Sigma =
\{\calE\in{\cal W}: d_h(\calE)=0\}$.
\label{diff_bounded}
\end{proposition}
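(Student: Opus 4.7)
The starting point is the algebraic identity
\[
\frac{1}{d} - \frac{1}{\delta_h} = \frac{\delta_h^2 - d^2}{d\,\delta_h\,(d + \delta_h)}.
\]
By \eqref{taylor_d2} and the definition \eqref{approxdist} of $\delta_h$, the numerator equals $-\mathcal{R}_3^{(h)}(\calE,V)$, so it is bounded by $C_5\,|V-V_h|^3$ on some neighborhood $\mathcal{U}$ of $(\calE_c,V_h(\calE_c))$, exactly as in \eqref{calR3}. The plan is to take $\mathcal{U}$ of the product form $\mathcal{W}\times\mathcal{V}$ with $\mathcal{V}\subset\2toro$ a small open neighborhood of $V_h(\calE_c)$, and to shrink $\mathcal{W}$ further so that $V_h(\calE)\in\mathcal{V}$ for every $\calE\in\mathcal{W}$. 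This forces $\mathcal{U}_\Sigma\subset\mathcal{U}$, and splits the estimation domain as
\[
(\mathcal{W}\times\2toro)\setminus\mathcal{U}_\Sigma \;=\; \bigl(\mathcal{U}\setminus\mathcal{U}_\Sigma\bigr) \;\cup\; \bigl(\mathcal{W}\times(\2toro\setminus\mathcal{V})\bigr).
\]

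On the inner piece $\mathcal{U}\setminus\mathcal{U}_\Sigma$, Lemma~\ref{inequalities} gives $d\geq\sqrt{C_1}\,\delta_h$, whence $d\,\delta_h(d+\delta_h)\geq\sqrt{C_1}\,\delta_h^3$, and $\delta_h^2\geq C_3\,|V-V_h|^2$, whence $|V-V_h|^3\leq C_3^{-3/2}\delta_h^3$. Combining these,
\[
\left|\frac{1}{d}-\frac{1}{\delta_h}\right| \;\leq\; \frac{C_5\,|V-V_h|^3}{\sqrt{C_1}\,\delta_h^3} \;\leq\; \frac{C_5}{\sqrt{C_1}\,C_3^{3/2}}.
\]
The denominators are strictly positive off $\mathcal{U}_\Sigma$: either $V\neq V_h(\calE)$, so $\delta_h^2\geq C_3|V-V_h|^2>0$ by \eqref{norm_equiv}, or else $\calE\notin\Sigma$, so $\delta_h\geq d_h>0$; in either case \eqref{d2_est} then forces $d>0$.

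On the outer piece $\mathcal{W}\times(\2toro\setminus\mathcal{V})$ both $d$ and $\delta_h$ are bounded below by positive constants, so $|1/d-1/\delta_h|$ is trivially bounded. The lower bound on $\delta_h$ follows from $\delta_h^2\geq C_3|V-V_h(\calE)|^2$ and the fact that $|V-V_h(\calE)|$ is bounded below on $\2toro\setminus\mathcal{V}$ by construction of $\mathcal{W}$. The lower bound on $d$ is a compactness argument: since $\calE_c$ has a single crossing point at $V_h(\calE_c)\in\mathcal{V}$, the continuous function $V\mapsto d(\calE_c,V)$ is strictly positive on the compact set $\2toro\setminus\mathcal{V}$, and by continuity the same lower bound (possibly halved) persists on $\mathcal{W}\times(\2toro\setminus\mathcal{V})$ after a further shrinking of $\mathcal{W}$. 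The main subtlety is keeping all constants uniform in $\calE\in\mathcal{W}$, which follows from the smoothness of $d^2$, the compactness of $\overline{\mathcal{U}}$, and the non-degeneracy of $\calE_c$ ruling out bifurcations of nearby stationary points.
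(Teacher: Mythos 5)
Your proof is correct and follows essentially the same route as the paper: the same algebraic identity, the bound $|\delta_h^2-d^2|=|{\cal R}_3^{(h)}|\le C_5|V-V_h|^3\le C_5C_3^{-3/2}\delta_h^3$ combined with $d\ge\sqrt{C_1}\,\delta_h$ on ${\cal U}\setminus{\cal U}_\Sigma$, and the boundedness of $1/d$, $1/\delta_h$ on ${\cal W}\times(\2toro\setminus{\cal V})$. The extra details you spell out (strict positivity of the denominators off ${\cal U}_\Sigma$ and the compactness argument giving the lower bound $d\ge C_7$ on the outer region) are left implicit in the paper but match its intent.
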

\begin{proof}
  By Lemma~\ref{inequalities} we can choose two neighborhoods ${\cal W}$,
  ${\cal V}$ of $\calE_c$ and $V_h(\calE_c)$ respectively such that both
  \eqref{d2_est} and \eqref{norm_equiv} hold in ${\cal U} = {\cal
    W}\times{\cal V}$ .
We restrict ${\cal W}$, if necessary, so that there exists $C_7>0$ with
$d\geq C_7$ for each $(\calE, V)\in{\cal
  W}\times(\2toro\setminus{\cal V})$ (see Figure~\ref{chooseW}).
In ${\cal U}\setminus{\cal U}_\Sigma$ we have
\[
\biggl| {1\over d}-{1\over \delta_h}\biggr| = \frac{\vert
\delta_h^2 - d^2\vert}{\delta_h\, d
[\delta_h + d]} \leq
\frac{1}{\sqrt{C_1}[1+\sqrt{C_1}]}\frac{\vert \delta_h^2 -
d^2\vert}{\delta_h^3} \leq C
\]
for a constant $C>0$.
Using the boundedness of $1/d$, $1/\delta_h$ in ${\cal
  W}\times(\2toro\setminus{\cal V})$ we conclude the proof.
\end{proof}

Now we estimate the derivatives of the remainder function $1/d -
1/\delta_h$.

\begin{proposition}
There exist $C>0$ and a neighborhood ${\cal W}$ of $\calE_c$
such that, if $y_k$ is a component of Delaunay's elements $Y$, the estimate
\begin{equation}
  \left|\frac{\partial}{\partial y_k}\Bigl(\frac{1}{d}-
\frac{1}{\delta_h}\Bigr)\right|\leq
  \frac{C}{d_h+|V-V_h|}
\label{boundrem}
\end{equation}
holds for each $(\calE, V) \in ({\cal W}\times\2toro)\setminus {\cal U}_\Sigma$,
with ${\cal U}_\Sigma$ as in Proposition~\ref{diff_bounded}.  Therefore the
map
\begin{equation}
{\cal W}\setminus\Sigma \ni \calE \mapsto
\int_{\2toro}\frac{\partial}{\partial y_k}\Bigl(\frac{1}{d}-\frac{1}{\delta_h}\Bigr)\,d\ell
d\ell'\,,
\label{int_rem}
\end{equation}
where $\Sigma = \{\calE\in{\cal W}: d_h(\calE)=0\}$, can be extended continuously to
the whole set ${\cal W}$.
\label{extend_rem}
\end{proposition}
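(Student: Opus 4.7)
The plan is to split the derivative as
\[
\partial_{y_k}\Bigl(\frac{1}{d}-\frac{1}{\delta_h}\Bigr) = \frac{1}{2}\Bigl[\frac{\partial_{y_k}(\delta_h^2 - d^2)}{d^3} + \partial_{y_k}(\delta_h^2)\Bigl(\frac{1}{\delta_h^3} - \frac{1}{d^3}\Bigr)\Bigr]
\]
and to bound each of the two summands by $C/(d_h + |V-V_h|)$. The cancellation making this possible is built into the definition of $\delta_h$: by \eqref{taylor_d2}--\eqref{approxdist} one has $\delta_h^2 - d^2 = -\mathcal{R}_3^{(h)}$, which vanishes to third order in $V-V_h$, whereas $d^2$ and $\delta_h^2$ are individually only of order $d_h^2 + |V-V_h|^2$.

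The first step is to prove $\partial_{y_k}\mathcal{R}_3^{(h)} = O(|V-V_h|^2)$. From the integral representation \eqref{intform}--\eqref{r_alpha} and the smoothness of $\calE \mapsto V_h(\calE)$ (granted by the implicit function theorem at a non-degenerate minimum), the terms arising from differentiating $r_\alpha^{(h)}$ remain of order $|V-V_h|^3$, while those from differentiating $(V-V_h)^\alpha$ acquire a factor $\partial_{y_k} V_h$ and are of order $|V-V_h|^2$. Together with Lemma~\ref{inequalities}, which gives $d \ge C\sqrt{d_h^2 + |V-V_h|^2} \ge C'(d_h + |V-V_h|)$, the first summand is dominated by
\[
\frac{C|V-V_h|^2}{(d_h^2 + |V-V_h|^2)^{3/2}} \le \frac{C'}{d_h + |V-V_h|}.
\]

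For the second summand, a direct computation from the definitions of $\delta_h^2$ and $\calA_h$ gives $\partial_{y_k}\delta_h^2 = 2 d_h\,\partial_{y_k} d_h + O(|V-V_h|) = O(d_h + |V-V_h|)$, since differentiating $(V-V_h)\cdot\calA_h(V-V_h)$ loses at most one factor of $|V-V_h|$. Applying the mean value theorem to $x\mapsto x^{3/2}$ together with \eqref{d2_est} and $|\delta_h^2 - d^2| = O(|V-V_h|^3)$ yields
\[
|d^3 - \delta_h^3| \le C(d_h + |V-V_h|)\,|V-V_h|^3,
\]
and dividing by $d^3\delta_h^3 \ge C(d_h+|V-V_h|)^6$ (again via Lemma~\ref{inequalities}) delivers the bound on the second summand, completing \eqref{boundrem}.

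The continuous extension of \eqref{int_rem} to $\Sigma$ then follows from dominated convergence applied to any sequence $\calE_n \to \calE^* \in \Sigma$: the integrand converges pointwise on $\2toro \setminus \{V_h(\calE^*)\}$, and switching to polar coordinates centered at $V_h(\calE_n)$ shows that the majorant $1/(d_h(\calE_n) + |V-V_h(\calE_n)|)$ is integrated against the area measure to a quantity bounded uniformly in $n$, since $\int_0^R r\,dr/(d_h+r) \le R$. The main delicate point in the whole argument is the $O(|V-V_h|^2)$ estimate for $\partial_{y_k}\mathcal{R}_3^{(h)}$, which rests on carefully tracking the joint $\calE$-dependence of $V_h$ and of the coefficients $r_\alpha^{(h)}$; everything else is a bookkeeping of powers based on Lemma~\ref{inequalities}.
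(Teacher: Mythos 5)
Your proof is correct and follows essentially the same route as the paper: the identical decomposition into $-\partial_{y_k}\mathcal{R}_3^{(h)}/(2d^3)$ plus $\tfrac12(\delta_h^{-3}-d^{-3})\,\partial_{y_k}\delta_h^2$, the same key estimates $|\partial_{y_k}\mathcal{R}_3^{(h)}|=O(|V-V_h|^2)$ and $|\partial_{y_k}\delta_h^2|=O(d_h+|V-V_h|)$ via \eqref{ddeltah2dyk} and the boundedness of $\partial V_h/\partial y_k$, and the same use of Lemma~\ref{inequalities}. The only cosmetic difference is that you control $\delta_h^{-3}-d^{-3}$ by the mean value theorem applied to $x\mapsto x^{3/2}$ rather than by the algebraic factorization through the already-bounded difference $1/d-1/\delta_h$ of Proposition~\ref{diff_bounded}; both yield the required bound.
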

\begin{proof}
  In the following we denote by $C_j, j=8\ldots 14$ some positive
  constants. We write
\[
\frac{\partial}{\partial y_k}\Bigl(\frac{1}{d}-\frac{1}{\delta_h}\Bigr) =
\frac{1}{2}\Bigl(\frac{1}{\delta_h^3} - \frac{1}{d^3}\Bigr)\frac{\partial \delta_h^2}{\partial y_k} -
\frac{1}{2d^3}\frac{\partial {\cal R}_3^{(h)}}{\partial y_k}\,,
\]
and give an estimate for the two terms at the right hand side.  We choose a
neighborhood ${\cal U} = {\cal W}\times{\cal V}$ of $(\calE_c,V_h(\calE_c))$
as in Proposition~\ref{diff_bounded} so that, using \eqref{d2_est},
\eqref{norm_equiv} and the boundedness of the
remainder function, we have
\begin{eqnarray*}
\biggl|\frac{1}{\delta_h^3} - \frac{1}{d^3}\biggr| &=&
\left|\frac{1}{\delta_h} -
\frac{1}{d}\right|\Bigl(\frac{1}{\delta_h^2} +\frac{1}{\delta_h d} +
\frac{1}{d^2}\Bigr) \leq
\frac{C_8}{d_h^2 + |V-V_h|^2}
\end{eqnarray*}
in ${\cal U}_0={\cal U}\setminus{\cal U}_\Sigma$.
Moreover in ${\cal U}_0$ we have
\[
\biggl|\frac{\partial \delta_h^2}{\partial y_k}\biggr| \leq
\biggl|\frac{\partial d_h^2}{\partial y_k}\biggr| + C_9|V-V_h| \leq C_{10}(d_h
+ |V-V_h|)\,,
\]
since
\begin{equation}
  \frac{\partial \delta_h^2}{\partial y_k} = \frac{\partial d_h^2}{\partial y_k}
  - 2\frac{\partial V_h}{\partial y_k}\cdot{\cal A}_h(V-V_h) + (V-V_h)\cdot\frac{\partial {\cal A}_h}{\partial y_k}(V-V_h)\,,
\label{ddeltah2dyk}
\end{equation}
and the derivatives
\[
\frac{\partial V_h}{\partial  y_k}(\calE) = -[{\cal
  H}_h(\calE)]^{-1}\frac{\partial}{\partial  y_k}\nabla_V
d^2(\calE,V_h(\calE))
\] are uniformly bounded for $\calE\in{\cal W}$ since bifurcations do not occur.

Hence the relation
\begin{equation}
\biggl|\Bigl(\frac{1}{\delta_h^3} - \frac{1}{d^3}\Bigr)\frac{\partial
    \delta_h^2}{\partial y_k}\biggr| \leq
C_{11}\frac{d_h+|V-V_h|}{d_h^2+|V-V_h|^2} \leq \frac{2C_{11}}{d_h+|V-V_h|}
\label{primo_pezzo}
\end{equation}
holds in ${\cal U}_0$,
with $C_{11}=C_8C_{10}$. We also have
\begin{equation}
\biggl|\frac{\partial{\cal R}_3^{(h)}}{\partial y_k}\biggr|
\leq C_{13}|V-V_h|^2\,,
%\leq C_{11}\max_{|\alpha|=3}\sup_{(\calE,V)\in{\cal
%    U}_0}\biggl[
%\Bigl|\frac{\partial r_\alpha^{(h)}}{\partial y_k}\Bigr||V-V_h|^3 +
%|r_\alpha^{(h)}||V-V_h|^2\Bigl|\frac{\partial V_h}{\partial y_k}\Bigr|
%\biggr]
\label{stima_derR3}
\end{equation}
for $(\calE,V)\in{\cal U}_0$, in fact
\begin{equation}
\sup_{{\cal U}_0}|r_\alpha^{(h)}| < +\infty\,, \qquad \sup_{{\cal U}_0}\biggl|\frac{\partial
  r_\alpha^{(h)}}{\partial y_k}\biggr| < +\infty\,,
\label{r_alpha_bounds}
\end{equation}
for each $\alpha=(\alpha_1,\alpha_2)$ with $|\alpha|=3$.
%
%Moreover, each $r_\alpha^{(h)}$ is uniformly bounded with respect to ${\cal
%  E}$,
%see \eqref{stima_di_K},
%and
%\[
%\biggl|\frac{\partial r_\alpha^{(h)}}{\partial  y_k}\biggr| \leq
%\frac{3}{\alpha!}\sup_{(\calE,V)\in{\cal
%    U}_0}\Bigl|\frac{\partial}{\partial  y_k}D^\alpha d^2(\calE,V)\Bigr| <+\inf%ty\ .
%\]
Using again \eqref{d2_est}, \eqref{norm_equiv} we obtain
\begin{equation}
\biggl|\frac{1}{d^3}\frac{\partial {\cal R}_3^{(h)}}{\partial  y_k}\biggr|\leq
\frac{C_{14}}{d_h+|V-V_h|}\ .
\label{secondo_pezzo}
\end{equation}
From \eqref{primo_pezzo}, \eqref{secondo_pezzo}
we obtain \eqref{boundrem}
%the integrand function
%in \eqref{int_rem} can be uniformly bounded by $C_{14}/|V-V_h|$, which
%has finite integral over $\2toro$,
and the assert of the proposition follows using
the boundedness of $\frac{\partial}{\partial
  y_k}\bigl({1}/{d}\bigr), \frac{\partial}{\partial
  y_k}\bigl({1}/{\delta_h}\bigr)$ in ${\cal
  W}\times(\2toro\setminus{\cal V})$.
\end{proof}

From \eqref{boundrem} in Proposition~\ref{extend_rem} the average over
$\2toro$ of the derivatives of $1/d - 1/\delta_h$ in \eqref{int_rem} is finite
for each $\calE$ in ${\cal W}$, and can be computed by exchanging the integral
and differential operators: therefore the average of the remainder function is
continuously differentiable in ${\cal W}$.

On the other hand, the average over $\2toro$ of the derivatives with respect
to $y_k$ of $1/\delta_h$ are non-convergent integrals for $\calE\in\Sigma$:
for this reason the averaged vector field in \eqref{averrhs} is not defined at
orbit crossings.  Next we show, exchanging again the integral
and differential operators, that the average of these derivatives admit two
analytic extensions to the whole ${\cal W}$ from both sides of the singular
set $\Sigma$.

\noindent For this purpose, given a neighborhood ${\cal W}$ of $\calE_c$, we set
\[
{\cal W}^+ = {\cal W}\cap\{\tilde{d}_h>0\}\,,
\hskip 0.7cm
{\cal W}^- = {\cal W}\cap\{\tilde{d}_h<0\}\,,
\]
with $\tilde{d}_h$ given by \eqref{choosesign}.

% --------------------------------------------------------------------
\begin{proposition}
  There exists a neighborhood ${\cal W}$ of $\calE_c$ such that
% setting
%\[
%{\cal W}^+ = {\cal W}\cap\{\tilde{d}_h>0\}\,,
%\hskip 0.7cm
%{\cal W}^- = {\cal W}\cap\{\tilde{d}_h<0\}\,,
%\]
the maps
\[ {\cal W}^+ \ni \calE \mapsto \frac{\partial}{\partial y_k}\int_{\2toro}
\frac{1}{\delta_h}\,d\ell\,d\ell'\,, \hskip 0.7cm {\cal W}^- \ni \calE \mapsto
\frac{\partial}{\partial y_k}\int_{\2toro}
\frac{1}{\delta_h}\,d\ell\,d\ell'\,,
\]
where $y_k$ is a component of Delaunay's elements $Y$, can be extended
to two different analytic maps ${\cal G}_{h,k}^+, {\cal G}_{h,k}^-$, defined
in ${\cal W}$.
\label{extend_approx}
\end{proposition}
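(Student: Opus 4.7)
The plan is to compute the local part of $\int_{\2toro}1/\delta_h\,d\ell\,d\ell'$ explicitly enough that the entire obstruction to analyticity collapses onto a single term proportional to $|d_h|$; on $\cal W^\pm$ that term equals $\pm\tilde{d}_h$, which is analytic by \cite{gronchi_tommei}, so two distinct analytic extensions of the derivative appear naturally.

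First I would localize as in Proposition~\ref{diff_bounded}: fix $\cal U = \cal W\times\cal V$ so that on $\cal W\times(\2toro\setminus\cal V)$ the integrand $1/\delta_h$ is smooth in $(\calE,V)$ and bounded, whence that portion of the integral together with all its $y_k$-derivatives is analytic in $\calE$ and differentiation passes freely through the integral sign. All the genuine work is then on $\cal V$. Since $\calA_h(\calE)$ is symmetric positive definite on $\cal W$, it admits an analytic symmetric square root $Q_h(\calE)$; the substitution $u = Q_h(\calE)(V - V_h(\calE))$ turns $\delta_h^2$ into $d_h^2 + |u|^2$ with Jacobian $1/\det Q_h$. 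Passing to polar coordinates $u = r(\cos\theta,\sin\theta)$ and using the exact antiderivative $\int r\,(d_h^2+r^2)^{-1/2}\,dr = \sqrt{d_h^2+r^2}$ yields
\[
\int_{\cal V}\frac{dV}{\delta_h} \;=\; \frac{1}{\det Q_h(\calE)}\int_0^{2\pi}\!\sqrt{d_h^2(\calE)+R(\theta,\calE)^2}\,d\theta \;-\; \frac{2\pi\,|d_h(\calE)|}{\det Q_h(\calE)},
\]
where $r = R(\theta,\calE)$ parametrizes the boundary of $Q_h(\calE)(\cal V - V_h(\calE))$ and is bounded below by a positive constant uniformly in $\calE\in\cal W$.

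The first summand is manifestly analytic in $\calE\in\cal W$, since $d_h^2+R^2$ is a strictly positive analytic function of $(\theta,\calE)$ and integration in $\theta$ preserves analyticity. The only source of non-analyticity is therefore $|d_h|/\det Q_h$. Using $|d_h| = \tilde{d}_h$ on $\cal W^+$ and $|d_h| = -\tilde{d}_h$ on $\cal W^-$, together with the analyticity of $\tilde{d}_h$ on the full neighborhood $\cal W$ of $\calE_c$, we obtain on $\cal W^+$
\[
\frac{\partial}{\partial y_k}\int_{\2toro}\frac{d\ell\,d\ell'}{\delta_h}
\;=\;\frac{\partial}{\partial y_k}\left[\,\text{(analytic part)}\,-\,\frac{2\pi\,\tilde{d}_h}{\det Q_h}\right],
\]
and the right-hand side is an analytic function on the whole of $\cal W$: this defines ${\cal G}_{h,k}^+$. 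Replacing $\tilde{d}_h$ by $-\tilde{d}_h$ gives ${\cal G}_{h,k}^-$. The two extensions differ by $-4\pi\,\partial_{y_k}(\tilde{d}_h/\det Q_h)$ and are therefore genuinely distinct whenever the gradient of $\tilde{d}_h$ at $\calE_c$ does not vanish.

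The main technical hurdle lies in establishing analyticity in $\calE$ of the auxiliary objects $Q_h$ and $R(\theta,\calE)$: for $Q_h$ I would invoke the analytic functional calculus for the positive definite matrix $\calA_h(\calE)$; for $R$, the analytic implicit function theorem applied to an analytic, star-shaped $\cal V$ (for instance a small Euclidean disk centered at $V_h(\calE_c)$). Once these are in hand, the closed form displayed above reduces the proposition to the analyticity of $\tilde{d}_h$ itself, which is precisely the content of \cite{gronchi_tommei}.
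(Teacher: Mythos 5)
Your proposal is correct and follows essentially the same route as the paper: localize near $V_h$, diagonalize $\delta_h^2$ via the square root of ${\cal A}_h$, integrate the radial variable exactly so that the whole obstruction to analyticity collapses onto the term $-2\pi d_h/\sqrt{\det{\cal A}_h}$, and replace $d_h$ by $\pm\tilde{d}_h$ to get the two analytic extensions. The only difference is cosmetic: the paper takes the localization domain to be the ${\cal A}_h$-adapted ellipse $\{(V-V_h)\cdot{\cal A}_h(V-V_h)\le r^2\}$, which maps to a Euclidean disk of fixed radius $r$ and so avoids your auxiliary boundary function $R(\theta,\calE)$ and the implicit-function argument needed for its analyticity.
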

\begin{proof}

  We choose ${\cal W}$ as in Proposition~\ref{extend_rem} and, if necessary,
  we restrict this neighborhood by requiring that
  $\tau_1^{(h)}\times\tau_2^{(h)}\neq 0$ in ${\cal W}$, so that
  $\tilde{d}_h|_{{\cal W}}$ is analytic.  To investigate the behavior close to
  the singularity, for each $\calE\in{\cal W}$, we can restrict the integrals
  to the domain
\begin{equation}
{\cal D} = {\cal D}(V_h,r) = \{V\in\2toro: (V-V_h)\cdot {\cal A}_h(V-V_h)
\leq r^2\}\,,
\label{domain}
\end{equation}
for a suitable $r>0$.
By using the coordinate change $\xi = {\cal A}_h^{1/2}(V-V_h)$ and then
polar coordinates $(\rho,\theta)$, defined by
$(\rho\cos\theta,\rho\sin\theta)=\xi$, we have
\begin{IEEEeqnarray*}{rCl}
\int_{{\cal D}}\frac{1}{\delta_h}\,d\ell d\ell' &=&
  \frac{1}{\sqrt{\det{\cal A}_h}}\int_{{\cal B}}
  \frac{1}{\sqrt{d_h^2 + |\xi|^2}}\,d\xi \\
&=& \frac{2\pi}{\sqrt{\det{\cal A}_h}}
  \int_0^r\frac{\rho}{\sqrt{d_h^2 + \rho^2}}\,d\rho =
 \frac{2\pi}{\sqrt{\det{\cal A}_h}}(\sqrt{d_h^2+r^2} - d_h) \,,
%\label{intdeltah}
\end{IEEEeqnarray*}
with ${\cal B} = \{\xi\in\R^2:|\xi|\leq r\}$.  The term $-2\pi
d_h/\sqrt{\det{\cal A}_h}$ is not differentiable at $\calE =
\calE_c\in\Sigma$.  We set
\[
{\cal F}_{h,k} = \frac{\partial}{\partial
  y_k}\Bigl(\frac{2\pi}{\sqrt{\det{\cal A}_h}}\Bigr) \sqrt{d_h^2 + r^2} +
\frac{2\pi}{\sqrt{\det{\cal A}_h}} \frac{\tilde{d}_h}{\sqrt{d_h^2 + r^2}}
\frac{\partial \tilde{d}_h}{\partial y_k}
\]
with $\tilde{d}_h$ as in \eqref{choosesign}, and define on ${\cal W}$
the two analytic maps
\begin{equation}
{\cal G}_{h,k}^\pm = {\cal F}_{h,k} \mp \frac{\partial}{\partial
  y_k}\Bigl(\frac{2\pi}{\sqrt{\det{\cal A}_h}}\Bigr)\tilde{d}_h \mp
\frac{2\pi}{\sqrt{\det{\cal A}_h}} \frac{\partial \tilde{d}_h}{\partial
  y_k}
+ \frac{\partial}{\partial y_k}\int_{\2toro\setminus{\cal D}}
\frac{1}{\delta_h}\,d\ell\,d\ell' \ .
\label{derdh}
\end{equation}
We observe that ${\cal G}_{h,k}^+$ (resp. ${\cal G}_{h,k}^-$) corresponds to the
derivative of $\int_{\2toro} 1/\delta_h\,d\ell\,d\ell'$ with respect to $y_k$
on ${\cal W}^+$ (resp. ${\cal W}^-$).
\end{proof}

Now we state the main result.
%--------------------------------------------------------------------------
%\begin{theorem}
%The averaged vector field in \eqref{averrhs} can
%be extended to two Lipschitz--continuous maps
%$-\mathbb{J}_2(\ov{\nabla R})_h^+,
% -\mathbb{J}_2(\ov{\nabla R})_h^-$
%on a neighborhood ${\cal W}$ of $\calE_c$.
%The components
%$\bigl(\frac{\ov{\partial R}}{\partial y_k}\bigr)_h^\pm$
%of $(\ov{\nabla R})_h^\pm$, restricted to ${\cal W}^\pm$ respectively,
%correspond to
%$\frac{\ov{\partial R}}{\partial y_k}$.
\begin{theorem}
  The averages over $\2toro$ of the derivatives of $R$ with respect to
  Delaunay's elements $y_k$ can be extended to two Lipschitz--continuous maps
  $\bigl(\frac{\ov{\partial R}}{\partial y_k}\bigr)_h^\pm$ on a neighborhood
  ${\cal W}$ of $\calE_c$.
%provided the derivatives
%$\frac{\partial^2 V_h}{\partial y_j\partial y_k}$ are bounded on ${\cal W}$.
%(which assume the same values as $\frac{\partial \ov R}{\partial y_k}$ on
%${\cal W}^+,{\cal W}^-$ respectively),
  These maps, restricted to ${\cal W}^+$, ${\cal W}^-$ respectively,
  correspond to $\frac{\ov{\partial R}}{\partial y_k}$.  Moreover the
  following relations hold:
\begin{eqnarray}
{\rm Diff}_h \left(\frac{\ov{\partial R}}{\partial y_k}\right)
&\stackrel{def}{=}&
\Bigl(\frac{\ov{\partial R}}{\partial y_k}\Bigr)_h^- -
\Bigl(\frac{\ov{\partial R}}{\partial y_k}\Bigr)_h^+ = \nonumber \\
&=& \frac{\mu
  k^2}{\pi}\biggl[
\frac{\partial}{\partial y_k}\biggl(\frac{1}{\sqrt{\det(\calA_h)}}\biggr)\tilde{d}_h +
\frac{1}{\sqrt{\det(\calA_h)}} \frac{\partial \tilde{d}_h}{\partial
  y_k}
\biggr]\,,
\label{diff}
\end{eqnarray}
with the derivatives of $\tilde{d}_h$ given by \eqref{derdhtilde}.
\label{der_jumps}
\end{theorem}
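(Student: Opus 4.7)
The strategy is to invoke the splitting $1/d = (1/d-1/\delta_h) + 1/\delta_h$ of Section~\ref{s:extract} and to treat separately the smooth indirect part of the perturbing function, reducing the regularity statement to the two propositions just proved. The indirect term $-\mu k^2\,\mathcal{X}\cdot\mathcal{X}'/|\mathcal{X}'|^3$ is analytic in $\calE$ on ${\cal W}\times\2toro$ for $\calE$ near $\calE_c$ (the Earth never meets the Sun) and is bounded together with all its derivatives, so its $\2toro$-average is analytic on a neighborhood ${\cal W}$ of $\calE_c$ and contributes identically to the $+$ and $-$ extensions; it therefore disappears from the jump \eqref{diff}.

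For the direct part $\mu k^2/d$, on ${\cal W}\setminus\Sigma$ we may exchange integration and differentiation and use the splitting to write
\[
\frac{\overline{\partial R}}{\partial y_k} = \frac{\mu k^2}{(2\pi)^2}\int_{\2toro}\frac{\partial}{\partial y_k}\Bigl(\frac{1}{d}-\frac{1}{\delta_h}\Bigr)d\ell\, d\ell' + \frac{\mu k^2}{(2\pi)^2}\frac{\partial}{\partial y_k}\int_{\2toro}\frac{1}{\delta_h}\,d\ell\, d\ell' + \frac{\partial \overline{R_{\mathrm{ind}}}}{\partial y_k}\,.
\]
Proposition~\ref{extend_rem} extends the first summand continuously to ${\cal W}$; reapplying its argument to second derivatives with an integrable dominating bound of the form $C/(d_h+|V-V_h|)$ shows that the extension is in fact $C^1$, hence locally Lipschitz (after possibly shrinking ${\cal W}$). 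Proposition~\ref{extend_approx} supplies the two analytic extensions $\mathcal{G}_{h,k}^\pm$ of the second summand from ${\cal W}^\pm$. Summing the three pieces defines two maps $\bigl(\frac{\overline{\partial R}}{\partial y_k}\bigr)_h^\pm$ on ${\cal W}$ that agree with $\overline{\partial R}/\partial y_k$ on ${\cal W}^\pm$ and are Lipschitz-continuous as sums of $C^1$ and analytic functions.

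The jump formula now follows directly from the explicit expressions \eqref{derdh}: the $\mathcal{F}_{h,k}$ term and the boundary integral over $\2toro\setminus{\cal D}$ are common to $\mathcal{G}_{h,k}^+$ and $\mathcal{G}_{h,k}^-$ and cancel on subtraction, while the two sign-flipped terms double, giving
\[
\mathcal{G}_{h,k}^- - \mathcal{G}_{h,k}^+ = 4\pi\Bigl[\frac{\partial}{\partial y_k}\Bigl(\frac{1}{\sqrt{\det\calA_h}}\Bigr)\tilde{d}_h + \frac{1}{\sqrt{\det\calA_h}}\frac{\partial \tilde{d}_h}{\partial y_k}\Bigr]\,.
\]
Multiplication by $\mu k^2/(2\pi)^2$ collapses the prefactor to $\mu k^2/\pi$ and matches \eqref{diff}, with $\partial \tilde{d}_h/\partial y_k$ read off from \eqref{derdhtilde}. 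I expect the main technical obstacle to be the upgrade from continuity to Lipschitz regularity of the remainder extension coming from Proposition~\ref{extend_rem}: one must verify that the dominating bound \eqref{boundrem} survives a difference quotient in $\calE$ uniformly on a compact sub-neighborhood of $\calE_c$, which in turn rests on the smooth $\calE$-dependence of $V_h$, $\calA_h$ and the Taylor coefficients $r_\alpha^{(h)}$ away from bifurcation and exchange-of-minima configurations.
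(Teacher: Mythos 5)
Your overall architecture matches the paper's: split $1/d$ into the remainder $1/d-1/\delta_h$ plus $1/\delta_h$, dispose of the indirect part separately, invoke Propositions~\ref{extend_rem} and \ref{extend_approx} for the two summands, and read the jump off \eqref{derdh}. Your computation of ${\cal G}_{h,k}^--{\cal G}_{h,k}^+$ and the collapse of the prefactor to $\mu k^2/\pi$ are correct and identical to the paper's.

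However, there is a genuine gap exactly where you flag the ``main technical obstacle,'' and the mechanism you propose to close it does not work. You claim that the argument of Proposition~\ref{extend_rem} can be ``reapplied to second derivatives with an integrable dominating bound of the form $C/(d_h+|V-V_h|)$.'' No such pointwise bound holds. The second derivative of $1/d$ contains the term $\frac{3}{4}d^{-5}\,\partial_{y_j}d^2\,\partial_{y_k}d^2$, and subtracting the corresponding $\delta_h$ term and using $d^2=\delta_h^2+{\cal R}_3^{(h)}$ with ${\cal R}_3^{(h)}=O(|V-V_h|^3)$ leaves a residue of size $|V-V_h|\,(d_h+|V-V_h|)^{-3}$, i.e.\ of order $(d_h+|V-V_h|)^{-2}$. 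Its integral over ${\cal D}$ behaves like $\int_0^r \rho(d_h+\rho)^{-2}\,d\rho\sim\log(1/d_h)$, which diverges as $\calE\to\Sigma$, so a dominated-convergence argument in the style of \eqref{boundrem} cannot give uniform boundedness of the $y_j$-derivative of the remainder integral. This is precisely why the paper does not argue pointwise: it writes the integrand of \eqref{secder1/d} as \eqref{already_done} plus the groups \eqref{easy_bound}--\eqref{difficult_bound}, pairs the unbounded integrals of \eqref{already_done} with \eqref{secderdelta} so that the singular parts (the $\mathfrak{T}_{j,k}^{(h)}$ in the footnote) cancel, and kills the remaining logarithmically divergent contributions by passing to polar coordinates adapted to ${\cal A}_h$ and using the vanishing \eqref{vanish} of $\int_0^{2\pi}(\cos\theta)^{\gamma_1}(\sin\theta)^{\gamma_2}\,d\theta$ for odd $|\gamma|$. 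Without this cancellation structure --- which occupies the bulk of the paper's proof --- the Lipschitz claim is unsupported. (A minor additional point: the paper proves boundedness of the derivatives on ${\cal W}\setminus\Sigma$, which yields Lipschitz continuity of the continuous extension; it does not, and need not, claim the extension is $C^1$ across $\Sigma$.)
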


\begin{proof}
Using the results of Propositions~\ref{extend_rem}, \ref{extend_approx} we
define the extended maps by
\[
\Bigl(\frac{\ov{\partial R}}{\partial  y_k}\Bigr)_h^\pm =
\frac{\mu k^2}{(2\pi)^2}\biggl[
\int_{\2toro} \frac{\partial}{\partial  y_k} \left(\frac{1}{d} -
\frac{1}{\delta_h}\right)\,d\ell\, d\ell' + {\cal G}_{h,k}^\pm\biggr]\,,
\]
with ${\cal G}_{h,k}^\pm$ given by \eqref{derdh}.
We show that the maps $\calE
\mapsto \bigl(\frac{\ov{\partial R}}{\partial y_k}\bigr)_h^\pm(\calE)$ are
Lipschitz--continuous extensions to ${\cal W}$ of $\frac{\ov{\partial R}}{\partial y_k}$.
The maps ${\cal G}_{h,k}^\pm$ are analytic in ${\cal W}$, thus we only
have to study the integrals $\int_{\2toro}\frac{\partial}{\partial y_k}({1/d} -
{1/\delta_h})\;d\ell d\ell'$. From Proposition~\ref{extend_rem} we know
that these maps are continuous.

We only need to investigate the behavior close to the singularity, therefore
we restrict these integrals to the domain ${\cal D}$ introduced in
\eqref{domain}.  We prove that the maps
\[
{\cal W}\setminus\Sigma \ni \calE \mapsto \frac{\partial}{\partial y_j}\int_{\cal D}\frac{\partial}{\partial
  y_k}\frac{1}{\delta_h}\,d\ell d\ell'\,, \hskip 1cm
{\cal W}\setminus\Sigma \ni \calE \mapsto \frac{\partial}{\partial
  y_j}\int_{\cal D}\frac{\partial}{\partial y_k}\frac{1}{d}\,d\ell d\ell'\,,
\]
with $j=1\ldots 4$, are bounded.
First observe that the derivatives
\begin{equation}
\frac{\partial}{\partial y_j}\int_{\cal D}\frac{\partial}{\partial
  y_k}\frac{1}{\delta_h}\,d\ell d\ell'  =
\int_{\cal D}\biggl(\frac{3}{4}\frac{1}{\delta_h^5}
\frac{\partial\delta_h^2}{\partial y_j}\frac{\partial\delta_h^2}{\partial y_k}
-\frac{1}{2}\frac{1}{\delta_h^3} \frac{\partial^2\delta_h^2}{\partial
  y_j\partial y_k}\biggr)\,d\ell d\ell'
\label{secderdelta}
\end{equation}
are bounded in ${\cal W}\setminus \Sigma$, otherwise we could not find the
analytic extensions ${\cal G}_{h,k}^+, {\cal G}_{h,k}^-$ introduced in
Proposition~\ref{extend_approx}.\footnote{Actually we can prove that
\begin{eqnarray*}
\frac{3}{4}\int_{\cal D} \frac{1}{\delta_h^5}\frac{\partial\delta_h^2}{\partial y_j}\frac{\partial\delta_h^2}{\partial y_k}\,d\ell d\ell' =
\mathfrak{T}_{j,k}^{(h)} + \mathfrak{U}_{j,k}^{(h)}\,,
\hskip 1cm
\frac{1}{2}\int_{\cal D}\frac{1}{\delta_h^3} \frac{\partial^2\delta_h^2}{\partial
  y_j\partial y_k}\,d\ell d\ell' = \mathfrak{T}_{j,k}^{(h)} + \mathfrak{V}_{j,k}^{(h)}\,,
\end{eqnarray*}
where $\mathfrak{U}_{j,k}^{(h)},\mathfrak{V}_{j,k}^{(h)}$ are bounded in ${\cal
  W}\setminus\Sigma$, and
\[
\mathfrak{T}_{j,k}^{(h)} = \frac{2\pi}{d_h\sqrt{\det{\cal A}_h}}\left(\frac{\partial
    d_h}{\partial \calE_j}\frac{\partial d_h}{\partial
    \calE_k} + \frac{\partial V_h}{\partial \calE_j}\cdot{\cal
    A}_h\frac{\partial V_h}{\partial \calE_k}\right)
\]
is unbounded but cancels out in the difference.}
Then we show that the maps
\begin{eqnarray}
\frac{\partial}{\partial y_j}\int_{\cal D}\frac{\partial}{\partial
  y_k}\frac{1}{d}\,d\ell d\ell' &=&
\int_{\cal D}\biggl(\frac{3}{4}\frac{1}{d^5}
\frac{\partial d^2}{\partial y_j}\frac{\partial d^2}{\partial y_k}
-\frac{1}{2}\frac{1}{d^3} \frac{\partial^2 d^2}{\partial
  y_j\partial y_k}\biggr)\,d\ell d\ell'
\label{secder1/d}
\end{eqnarray}
are bounded in ${\cal W}\setminus\Sigma$.
Using \eqref{taylor_d2}, \eqref{approxdist} we write the integrand function in the right hand
side of \eqref{secder1/d} as the sum of
\begin{equation}
 \frac{3}{4}\frac{1}{\delta_h^5} \frac{\partial \delta_h^2}{\partial
    y_j}\frac{\partial \delta_h^2}{\partial y_k}\,,
\hskip 0.5cm
 -\frac{1}{2}\frac{1}{\delta_h^3} \frac{\partial^2 \delta_h^2}{\partial
    y_j\partial y_k}
\label{already_done}
\end{equation}
and of terms of the following kind:
\begin{equation}
 \frac{3}{4}\frac{1}{\delta_h^5} \frac{\partial {\cal R}_3^{(h)}}{\partial
   y_j}\left[\frac{\partial {\cal R}_3^{(h)}}{\partial y_k}\bigl(1+{\cal
     P}_5^{(h)}\bigr) + {\cal P}_5^{(h)}\frac{\partial \delta_h^2}{\partial y_k}\right]\,,
 \hskip 0.5cm
 -\frac{1}{2}\frac{{\cal P}_3^{(h)}}{\delta_h^3} \frac{\partial^2 {\cal
      R}_3^{(h)}}{\partial y_j\partial y_k}\,,
\label{easy_bound}
\end{equation}
\begin{equation}
 \frac{3}{4}\frac{1}{\delta_h^5} \frac{\partial {\cal R}_3^{(h)}}{\partial
   y_j}\frac{\partial \delta_h^2}{\partial y_k}\,,
\hskip 0.5cm -\frac{1}{2}\frac{1}{\delta_h^3} \frac{\partial^2 {\cal
    R}_3^{(h)}}{\partial y_j\partial y_k}\,,
\label{middle_bound}
\end{equation}
\begin{equation}
\frac{3}{4}\frac{{\cal P}_5^{(h)}}{\delta_h^5} \frac{\partial
  \delta_h^2}{\partial y_j}\frac{\partial \delta_h^2}{\partial y_k}\,,
\hskip 0.5cm -\frac{1}{2}\frac{{\cal P}_3^{(h)}}{\delta_h^3}
\frac{\partial^2 \delta_h^2}{\partial y_j\partial y_k}\ .
\label{difficult_bound}
\end{equation}

\noindent The integrals over ${\cal D}$ of the terms in \eqref{already_done}
are not bounded in ${\cal W}\setminus\Sigma$, but their sum is bounded and
corresponds to \eqref{secderdelta}.
In the following we denote by $C_j, j=15\ldots 34$ some positive constants.
Moreover we use the relation $d^2 = \delta_h^2 + {\cal R}_3^{(h)}$ and
the developments
\[
\frac{1}{d^s} =
\frac{1}{(\delta_h^2+{\cal R}_3^{(h)})^{s/2}} = \frac{1}{\delta_h^s}\left[1
 +{\cal P}_s^{(h)}\right]
\hskip 1cm (s=3,5)
\]
with
\[
{\cal P}_s^{(h)} = {\cal P}_s^{(h)}(\calE,V) = \sum_{|\beta|=1}
p_{\beta,s}^{(h)}({\cal E},V)(V-V_h)^\beta\,,
\]
\begin{equation}
p_{\beta,s}^{(h)}({\cal E},V) = \int_0^1 D^\beta\biggl[\Bigl(1+\frac{{\cal
    R}_3^{(h)}}{\delta_h^2}\Bigr)^{-s/2}\biggr](\calE,V_h+t(V-V_h))\,dt\ .
\label{pbetas}
\end{equation}
By developing \eqref{pbetas} we obtain
\begin{eqnarray*}
  p_{\beta,s}^{(h)}({\cal E},V)
%&=&
%  \int_0^1 D^\beta\biggl[\Bigl(1+\frac{{\cal R}_3^{(h)}}
%  {\delta_h^2}\Bigr)^{-\frac{s}{2}}\biggr](\calE,V_h+t(V-V_h))\,dt =\\
  &=&
  -\frac{s}{2}\int_0^1 \biggl[\biggl(1+\frac{{\cal
      R}_3^{(h)}}{\delta_h^2}\biggr)^{-\frac{s}{2}-1}D^\beta
  \biggl(\frac{{\cal R}_3^{(h)}}{\delta_h^2}\biggr)\biggr](\calE,V_h+t(V-V_h))\,dt
 \\
&=&
-\frac{s}{2}\int_0^1
D^\beta\biggl(\frac{{\cal R}_3^{(h)}}{\delta_h^2}\biggr)(\calE,V_h+t(V-V_h))\,dt +
\mathfrak{R}_s^{(h)}(\calE, V)\,,
\end{eqnarray*}
with $|\mathfrak{R}_s^{(h)}(\calE, V)|\leq C_{15}|V-V_h|, s=3,5$.
Moreover, we have
\begin{equation}
D^\beta\left(\frac{{\cal R}_3^{(h)}}{\delta_h^2}\right) = \frac{D^\beta{\cal
    R}_3^{(h)}}{\delta_h^2} - \frac{{\cal
    R}_3^{(h)}}{\delta_h^4}D^\beta\delta_h^2
\ .
\label{DbetaR3sudelta2}
\end{equation}
We can estimate the terms in \eqref{DbetaR3sudelta2} as follows:
\[
D^\beta{\cal R}_3^{(h)} = \sum_{|\alpha|=3} \left[D^\beta r_\alpha^{(h)}
  (V-V_h)^\alpha + r_\alpha^{(h)} D^\beta (V-V_h)^\alpha\right]
\]
where
\[
|D^\beta r_\alpha^{(h)}|\leq C_{16}\,,
\hskip 0.5cm
|D^\beta (V-V_h)^\alpha|\leq C_{17}|V-V_h|^2\,,
%D^\beta (V-V_h)^\alpha = c_{\alpha,\beta}(V-V_h)^{\alpha-\beta}\,,
%\]
%with
%\[
%c_{\alpha,\beta} =\left\{
%\begin{array}{ll}
%\frac{\alpha!}{(\alpha - \beta)!} &\mbox{if } \alpha_j-\beta_j\geq 0 \mbox{ for
%every } j\\
%%
%0 &\mbox{if } \alpha_j-\beta_j< 0 \mbox{ for some } j\\
%\end{array}
%\right.\,,
\]
so that
\[
|D^\beta {\cal R}_3^{(h)}|\leq C_{18}|V-V_h|^2
\]
Moreover
\[
D^\beta\delta_h^2 = 2D^\beta(V-V_h)\cdot{\cal A}_h(V-V_h)
\]
so that
\[
|D^\beta\delta_h| \leq C_{19}|V-V_h|\ .
\]
We conclude that
\[
\left|D^\beta\left(\frac{{\cal R}_3^{(h)}}{\delta_h^2}\right)\right|\leq C_{20}\,,
\ \mbox{ so that } \ |p_{\beta,s}^{(h)}(\calE,V)|\leq C_{21}\,,
\]
and we obtain the estimate
\begin{equation}
|{\cal P}_s^{(h)}(\calE,V)| \leq C_{22}|V-V_h|
\label{stima_Ps}
\end{equation}
for $(\calE,V)\in{\cal U}_0$.
Using \eqref{ddeltah2dyk}, \eqref{stima_derR3}, \eqref{stima_Ps}
and the estimate
\[
\biggl|\frac{\partial^2 {\cal R}_3^{(h)}}{\partial
    y_j\partial y_k}\biggr| \leq C_{23}|V-V_h|\,,
\]
that follows from the boundedness of
\[
r_\alpha^{(h)},\quad \frac{\partial r_\alpha^{(h)}}{\partial y_k},\quad
\frac{\partial^2 r_\alpha^{(h)}}{\partial y_j\partial y_k},\quad
\frac{\partial V_h}{\partial y_k},\quad \frac{\partial^2 V_h}{\partial
  y_j\partial y_k}\,,
\]
we can bound both terms in \eqref{easy_bound} by $C_{24}/|V-V_h|$, which
has finite integral over ${\cal D}$.\footnote{The boundedness of $\frac{\partial^2 V_h}{\partial
  y_j\partial y_k}$ on ${\cal W}$ follows by differentiating
with respect to $y_j$ the relation
\[ {\cal H}_h(\calE)\frac{\partial V_h}{\partial y_k}(\calE) =
-\frac{\partial}{\partial y_k}\nabla_V d^2(\calE,V_h(\calE))\ .
\]
}

\noindent To estimate the integrals of the terms in
\eqref{middle_bound} we observe that
\[
\frac{\partial{\cal R}_3^{(h)}}{\partial y_j} =
\sum_{|\alpha|=3}r_{\alpha,0}^{(h)}\frac{\partial(V-V_h)^\alpha}{\partial
  y_j} + \mathfrak{S}_3^{(h)}\,,
\quad
\frac{\partial^2 {\cal R}_3^{(h)}}{\partial y_j\partial y_k} =
\sum_{|\alpha|=3}r_{\alpha,0}^{(h)}\frac{\partial^2 (V-V_h)^\alpha}{\partial
  y_j\partial y_k} + \mathfrak{S}_2^{(h)}\,,
\]
with
\[
r_{\alpha,0}^{(h)} = r_{\alpha,0}^{(h)}(\calE) = r_\alpha^{(h)}(\calE,V_h(\calE))\,,
\qquad
|\mathfrak{S}_i^{(h)}|\leq C_{25}|V-V_h|^i\qquad (i=2,3)\ .
\]
Then, using \eqref{ddeltah2dyk} and writing $dV$ for $d\ell d\ell'$,
we have
\begin{eqnarray}
&&\hskip -0.8cm\biggl|\int_{\cal D}\frac{1}{\delta_h^5}\frac{\partial {\cal R}_3^{(h)}}{\partial
    y_j}\frac{\partial \delta_h^2}{\partial y_k}\,dV\biggr| \leq
\biggl|\frac{\partial d_h^2}{\partial y_k}\biggr|\biggl(
\sum_{|\alpha|=3}\bigl|r_{\alpha,0}^{(h)}\bigr|\int_{\cal D}\left|\frac{1}{\delta_h^5}
\frac{\partial(V-V_h)^\alpha}{\partial y_j}\right|\,dV + \int_{\cal D}\frac{|\mathfrak{S}_3^{(h)}|}{\delta_h^5}\,dV\biggl)
\nonumber\\
&&\label{prima}\\
&&\hskip -0.8cm +2\sum_{|\alpha|=3}\bigl|r_{\alpha,0}^{(h)}\bigr|\left|\int_{\cal
    D}\frac{1}{\delta_h^5} \frac{\partial(V-V_h)^\alpha}{\partial
    y_j}\left[\frac{\partial V_h}{\partial y_k}\cdot{\cal A}_h(V-V_h)
  \right]\,dV\right| + C_{26}\int_{\cal D}\frac{|V-V_h|^4}{\delta_h^5}\,dV\nonumber
\end{eqnarray}
and
\begin{equation}
\left|\int_{\cal D}\frac{1}{\delta_h^3}\frac{\partial^2 {\cal R}_3^{(h)}}{\partial
    y_j\partial y_k}\,dV\right| \leq
\sum_{|\alpha|=3}\bigl|r_{\alpha,0}^{(h)}\bigr|\left|\int_{\cal D}\frac{1}{\delta_h^3}
\frac{\partial^2(V-V_h)^\alpha}{\partial
    y_j\partial y_k}
\,dV\right|
+ C_{27}\int_{\cal D}\frac{|V-V_h|^2}{\delta_h^3}\,dV\ .
\label{seconda}
\end{equation}
Passing to polar coordinates $(\rho,\theta)$, defined by
$(\rho\cos\theta,\rho\sin\theta) = {\cal A}_h^{1/2}(V-V_h)$,
we find that
\[%begin{equation}
\biggl|\frac{\partial d_h^2}{\partial y_k}\biggr|
\int_{\cal D}\biggl|\frac{1}{\delta_h^5}
\frac{\partial(V-V_h)^\alpha}{\partial y_j}\biggr| \,dV \leq C_{28}\,,
\hskip 0.5cm
\biggl|\frac{\partial d_h^2}{\partial y_k}\biggr|\int_{\cal
  D}\frac{|\mathfrak{S}_3^{(h)}|}{\delta_h^5}\,dV \leq C_{29}
%\label{fstbound}
\]%end{equation}
for each $\calE\in{\cal W}\setminus\Sigma$ and $\alpha$ with
$|\alpha|=3$, in fact
\begin{equation}
\int_0^r\frac{\rho^i}{(d_h^2+\rho^2)^{5/2}}\,d\rho \leq \frac{C_{30}}{d_h}
\hskip 1cm
(i=3,4)\ .
\label{integrals}
\end{equation}
Moreover, passing to polar coordinates $(\rho,\theta)$, we have
\begin{eqnarray}
&&\int_{\cal D}\frac{1}{\delta_h^5}
\frac{\partial(V-V_h)^\alpha}{\partial y_j}
\left[\frac{\partial V_h}{\partial y_k}\cdot{\cal A}_h(V-V_h)\right]\,dV
=\nonumber \\
&=&
%\biggr)
    \int_0^r \frac{\rho^4}{(d_h^2+\rho^2)^{5/2}}\,d\rho
\sum_{|\gamma|=3}c_\gamma
\int_0^{2\pi}(\cos\theta)^{\gamma_1}(\sin\theta)^{\gamma_2}\,d\theta  = 0
\label{sndbound}
\end{eqnarray}
for some functions $c_\gamma:{\cal W}\setminus\Sigma\to\R$,
$\gamma=(\gamma_1,\gamma_2)$. Thus the integrals in \eqref{prima} are
uniformly bounded in ${\cal W}\setminus\Sigma$.
In \eqref{sndbound} we have used
\begin{equation}
\int_0^{2\pi}(\cos\theta)^{\gamma_1}(\sin\theta)^{\gamma_2}
\,d\theta = 0
\label{vanish}
\end{equation}
for each $\gamma$, with odd $|\gamma| = \gamma_1+\gamma_2$ .
Finally, using again \eqref{vanish}, we obtain
\begin{eqnarray*}
\biggl|\int_{\cal D}\frac{1}{\delta_h^3}
  \frac{\partial^2(V-V_h)^\alpha}{\partial y_j\partial y_k} \,dV\biggr| &\leq&
  \biggl|\sum_{|\gamma|=1}b_\gamma
\int_0^{2\pi}(\cos\theta)^{\gamma_1}(\sin\theta)^{\gamma_2}\,d\theta\biggr|
    \int_0^r \frac{\rho^2}{(d_h^2+\rho^2)^{3/2}}\,d\rho \\
&+& C_{31}\int_{\cal D}\frac{1}{|V-V_h|}\,dV = C_{31}\int_{\cal D}\frac{1}{|V-V_h|}\,dV
%\label{trdbound}
\end{eqnarray*}
for some functions $b_\gamma:{\cal W}\setminus\Sigma\to\R$.
Hence also the integrals in \eqref{seconda} are uniformly bounded in
${\cal W}\setminus\Sigma$.

\noindent To estimate the integrals of the terms in
\eqref{difficult_bound} we make the following decomposition:
\[
p_{\beta,s}^{(h)} = q_{\beta,s}^{(h)} + w_{\beta,s}^{(h)}\,,
\]
with
%\[
%q_{\beta,s}^{(h)} =
%-\frac{s}{2}\sum_{|\alpha|=3}r_{\alpha,0}^{(h)} \int_0^1
%\left[\frac{1}{\delta_h^2}\biggl(D^\beta(V-V_h)^\alpha
%  -\frac{D^\beta\delta_h^2}{\delta_h^2}(V-V_h)^\alpha
%  \biggr)\right](\calE,V_h+t(V-V_h))\,dt
%\]

\begin{align*}
&q_{\beta,s}^{(h)} \\
=&
-\frac{s}{2}\sum_{|\alpha|=3}r_{\alpha,0}^{(h)} \int_0^1
\left[\frac{1}{\delta_h^2}\biggl(D^\beta(V-V_h)^\alpha
  -\frac{D^\beta\delta_h^2}{\delta_h^2}(V-V_h)^\alpha
  \biggr)\right](\calE,V_h+t(V-V_h))\,dt
\end{align*}
and $|w_{\beta,s}^{(h)}| \leq C_{32}|V-V_h|$.
For the first term in \eqref{difficult_bound} we obtain
\begin{eqnarray}
&&\biggl|\int_{\cal D}\frac{{\cal P}_5^{(h)}}{\delta_h^5} \frac{\partial
      \delta_h^2}{\partial y_j}\frac{\partial \delta_h^2}{\partial
      y_k}\,dV\biggr| \leq
 \biggl|\frac{\partial
      d_h^2}{\partial y_j}\frac{\partial d_h^2}{\partial y_k}
 \int_{\cal
   D}\frac{1}{\delta_h^5}\sum_{|\beta|=1}q_{\beta,5}^{(h)}(V-V_h)^\beta\,dV\biggr|
\nonumber\\
&& \label{ultimastima}\\
&+& 4\biggl|\int_{\cal D}\frac{1}{\delta_h^5}\sum_{|\beta|=1}q_{\beta,5}^{(h)}(V-V_h)^\beta
      \left[\frac{\partial V_h}{\partial
        y_j}\cdot{\cal A}_h(V-V_h)\right]\left[\frac{\partial V_h}{\partial
        y_k}\cdot{\cal A}_h(V-V_h)\right]\,dV\biggr|\nonumber \\
\nonumber
&+& C_{33}\nonumber
\end{eqnarray}
%\begin{eqnarray}
%&&\biggl|\int_{\cal D}\frac{{\cal P}_5^{(h)}}{\delta_h^5} \frac{\partial
%      \delta_h^2}{\partial y_j}\frac{\partial \delta_h^2}{\partial
%      y_k}\,dV\biggr| \leq
% \biggl|\frac{\partial
%      d_h^2}{\partial y_j}\frac{\partial d_h^2}{\partial y_k}
% \int_{\cal
%   D}\frac{1}{\delta_h^5}\sum_{|\beta|=1}q_{\beta,5}^{(h)}(V-V_h)^\beta\,dV\biggr|
%\nonumber\\
%&& \label{ultimastima}\\
%&+& 4\biggl|\int_{\cal D}\frac{1}{\delta_h^5}\sum_{|\beta|=1}q_{\beta,5}^{(h)}(V-V_h)^\beta
%      \left[\frac{\partial V_h}{\partial
%        y_j}\cdot{\cal A}_h(V-V_h)\right]\left[\frac{\partial V_h}{\partial
%        y_k}\cdot{\cal A}_h(V-V_h)\right]\,dV\biggr| + C_{33}\nonumber
%\end{eqnarray}
where we have used polar coordinates and the inequalities \eqref{integrals}.
%and the relation
%\[
%d_h^2\int_0^r\frac{\rho^2}{(d_h^2+\rho^2)^{5/2}}\,d\rho \leq C_{28}\ .
%\]
The two integrals at the right hand side of \eqref{ultimastima} vanish: in
fact using Fubini-Tonelli's theorem and passing to polar coordinates
$(\rho,\theta)$, by relations \eqref{vanish} we obtain
\[
\begin{array}{l}
\displaystyle\int_{\cal
  D}\frac{1}{\delta_h^5}\sum_{|\beta|=1}q_{\beta,5}^{(h)}(V-V_h)^\beta\,dV
= \\
=\displaystyle\sum_{|\beta|=1}
\sum_{|\gamma|\in\{3,5\}}\int_0^1\int_0^r \phi_{\beta,\gamma}(\rho,t)\,d\rho\,dt
\int_0^{2\pi}
(\cos\theta)^{\gamma_1} (\sin\theta)^{\gamma_2}
\,d\theta = 0
\end{array}
\]
for some functions $\phi_{\beta,\gamma}:\R^+\times\R\to\R$. The computation
for the other integral is analogous.

\noindent The second term in \eqref{difficult_bound} is estimated in a similar
way:
\begin{eqnarray*}
\small\hskip -1cm &&\biggl|\int_{\cal D}\frac{{\cal P}_3^{(h)}}{\delta_h^3}
\frac{\partial^2 \delta_h^2}{\partial y_j\partial y_k}\,dV\biggr| \\
\leq
&&\biggl|\biggl(\frac{\partial^2 d_h^2}{\partial y_j\partial y_k}
+ 2\frac{\partial V_h}{\partial y_j}\cdot{\cal A}_h\frac{\partial V_h}{\partial y_k}\biggr)  \int_{\cal D}\frac{1}{\delta_h^3}
\sum_{|\beta|=1}q_{\beta,3}^{(h)}(V-V_h)^\beta\,dV\biggr| + C_{34}\,,
\end{eqnarray*}
%\begin{eqnarray*}
%&&\small\hskip -1cm \biggl|\int_{\cal D}\frac{{\cal P}_3^{(h)}}{\delta_h^3}
%\frac{\partial^2 \delta_h^2}{\partial y_j\partial y_k}\,dV\biggr| \leq
%%
%\biggl|\biggl(\frac{\partial^2 d_h^2}{\partial y_j\partial y_k}
%+ 2\frac{\partial V_h}{\partial y_j}\cdot{\cal A}_h\frac{\partial V_h}{\partial y_k}\biggr)  \int_{\cal D}\frac{1}{\delta_h^3}
%\sum_{|\beta|=1}q_{\beta,3}^{(h)}(V-V_h)^\beta\,dV\biggr| + C_{34}\,,
%\end{eqnarray*}
and the integral at the right hand side vanishes as well.

We conclude the proof observing that, using \eqref{derdh} and the theorem of
differentiation under the integral sign, the derivatives
$\bigl(\frac{\ov{\partial R}}{\partial y_k}\bigr)_h^+$,
$\bigl(\frac{\ov{\partial R}}{\partial y_k}\bigr)_h^-$, restricted to ${\cal
  W}^+$, ${\cal W}^-$ respectively, correspond to $\frac{\ov{\partial
    R}}{\partial y_k}$, and their difference in ${\cal W}$ is given by \eqref{diff}.
\end{proof}

\begin{remark}
  If $\calE_c$ is an orbit configuration with two crossings, assuming that
  $d_h(\calE_c) = 0$ for $h=1,2$, we can extract the singularity by
  considering the approximated distances $\delta_1, \delta_2$ and the
  remainder function $1/d - 1/\delta_1 - 1/\delta_2$.
\end{remark}

% ========================================================
\section{Generalized solutions}
\label{s:gensol}

We show that generically we can uniquely extend the solutions of
\eqref{averrhs} beyond the crossing singularity $d_{min}=0$.  This is obtained
by patching together classical solutions defined in the domains ${\cal W}^+$
with solutions defined in ${\cal W}^-$, or {\em vice versa}.

Let $a>0$ be a value for the semimajor axis of the asteroid and $\ov Y:I\to
\R^4$ be a continuous function defined in an open interval $I\subset\R$,
representing a possible evolution of the asteroid orbital elements
$Y=(G,Z,g,z)$.  We introduce
\begin{equation}
{\ov\calE}(t) = ({\ov E}(t), {\ov E}'(t))\,,
\label{pair}
\end{equation}
with
\begin{equation}
{\ov E}(t) = (k\sqrt{a},{\ov Y}(t))\,,
\label{astorb}
\end{equation}
where $k$ is Gauss' constant and ${\ov E}'$ is a known function of time
representing the evolution of the Earth.\footnote{In the case of one
  perturbing planet ${\ov E}'(t)$ is constant and represents the trajectory of
  a solution of the 2-body problem. If we consider more than one perturbing
  planet then ${\ov E}'(t)$ changes with time due to the planetary
  perturbations.}

Let $T({\ov Y})$ be the set of times $t_c\in I$ such that
$d_{min}({\ov\calE}(t_c)) = 0$, and assume that each $t_c$ is isolated, so
that we can represent the set
\[
I \setminus T({\ov Y}) = \displaystyle\sqcup_{j\in {\cal N}} I_j
\]
as disjoint union of open intervals $I_j$, with ${\cal N}$ a countable
(possibly finite) set.
\begin{definition}
  We say that $\ov Y$ is a generalized solution of \eqref{averrhs} if its
  restriction to each $I_j, j\in {\cal N}$ is a classical solution of \eqref{averrhs}
  and, for each $t_c\in T({\ov Y})$, there exist finite values of
\[
\lim_{t\to t_c^+}\dot{\ov Y}(t)\,,\hskip 0.5cm
\lim_{t\to t_c^-}\dot{\ov Y}(t)\ .
\]
\end{definition}

Choose $Y_0 \in\R^4$ and a time $t_0$ such that $d_{min}(\calE_0)>0$, with
$\calE_0 = (E_0,E_0')$, $E_0=(k\sqrt{a},Y_0)$, $E_0' = E'(t_0)$.
We show how we can construct a generalized solution of the Cauchy problem
\begin{equation}
  \dot{\ov Y}  = -\mathbb{J}_2\,\ov{ \nabla_Y R}\,,
\hskip 1cm
\ov Y(t_0) = Y_0\ .
\label{cauchy}
\end{equation}
Let $\ov Y(t)$ be the maximal classical solution of \eqref{cauchy}, defined in
the maximal interval $J$. Assume that $t_c=\sup J<+\infty$, and $\lim_{t\to
  t_c^-}\barcalE(t) = \calE_c$, with $\calE_c$ a non-degenerate crossing
configuration such that $d_{min}(\calE_c) = d_h(\calE_c) = 0$ for some $h$.  Let ${\cal W}$,
${\cal W}^\pm$ be chosen as in Theorem~\ref{der_jumps}.  Suppose that
there exists $\tau\in(t_0,t_c)$ such that
${\ov \calE}(t)\in {\cal W}^+$ for
$t\in(\tau,t_c)$.  Let $Y_\tau=\ov Y(\tau)$.
By Theorem~\ref{der_jumps} there exists $\dot Y_c\in\R^4$ such that
\begin{equation}
\lim_{t\to t_c^-}\dot{\ov Y}(t) = \dot Y_c\ .
\label{limit}
\end{equation}
In fact relation \eqref{limit} is fulfilled by the
solution of the Cauchy problem\footnote{Here $(\ov{\nabla_Y R})_h^+$ is the
  vector with components $\bigl(\frac{\ov{\partial R}}{\partial
    y_k}\bigr)_h^+$, $k=1\ldots 4$ introduced in Theorem~\ref{der_jumps}.}
\begin{equation}
  \dot{\ov Y}  = -\mathbb{J}_2\,(\ov{\nabla_Y R})_h^+\,,
\hskip 1cm
\ov Y(\tau) = Y_\tau\,,
\label{CPleft}
\end{equation}
which corresponds to the solution of \eqref{cauchy} in the interval $(\tau,t_c)$
and is defined also at the crossing time $t_c$. Let us denote by $Y_c$ its
value for $t=t_c$.
Using again Theorem~\ref{der_jumps} we can extend $\ov Y(t)$ beyond the
crossing singularity by considering the new problem
\begin{equation}
  \dot{\ov Y}  = -\mathbb{J}_2\,(\ov{\nabla_Y R})_h^-\,,
\hskip 1cm
\ov Y(t_c) = Y_c\ .
\label{CPright}
\end{equation}
The solution of \eqref{CPright} fulfils
\begin{equation}
\lim_{t\to t_c^+}\dot{\ov Y}(t) = \dot Y_c + \mathrm{Diff}_h(\ov{\nabla_Y
  R})(\ov\calE(t_c))\ .
\label{deriv_jump}
\end{equation}
The vector field in \eqref{CPright} corresponds to $-\mathbb{J}_2\ov{\nabla_Y
  R }$ on ${\cal W}^-$, thus we can continue the solution outside ${\cal W}$ and
this procedure can be repeated at almost every crossing singularities.
Indeed, the generalized solution is unique provided the evolution
$t\mapsto\ov\calE(t)$ is not tangent to the orbit crossing set $\Sigma$.

\noindent Moreover, if $\det{\cal A}_h = 0$ the extraction of the singularity,
described in Section~\ref{s:extract}, cannot be performed.

In case $\ov\calE(t)\in{\cal W}^-$ for $t\in(\tau,t_c)$ the previous
discussion still holds if we exchange $(\ov{\nabla_Y R})_h^+$ with
$(\ov{\nabla_Y R})_h^-$. In this case \eqref{deriv_jump} becomes
\[
\lim_{t\to t_c^+}\dot{\ov Y}(t) = \dot Y_c - \mathrm{Diff}_h(\ov{\nabla_Y
  R})(\ov\calE(t_c))\ .
\]

% =============================================================
\section{Evolution of the orbit distance}
\label{s:dminevol}

We prove that the secular evolution of $\tilde{d}_{min}$ is more
regular than that of the orbital elements in a neighborhood of a
planet crossing time.
We introduce the secular evolution of the distances $\tilde{d}_h$ and of
the orbit distance $\tilde{d}_{min}$:
\begin{equation}
{\ov d}_h(t) = \tilde{d}_h(\barcalE(t))\,,
\hskip 1cm
{\ov d}_{min}(t) = \tilde{d}_{min}(\barcalE(t))\ .
\label{evolution}
\end{equation}
Assume these maps are defined in an open interval containing a crossing time
$t_c$, and suppose $\calE_c=\barcalE(t_c)$ is a non-degenerate crossing
configuration at time $t_c$,
as in Section~\ref{s:extract}.

In the following we shall discuss only the case of $\tilde{d}_h$. The same
result holds for $\tilde{d}_{min}$, taking care of the possible exchange of role
of two local minima $d_h, d_k$ as absolute minimum.

%The following result holds:
\begin{proposition}
  Let $\ov Y(t)$ be a generalized solution of \eqref{cauchy} and $\ov\calE(t)$
  as in \eqref{pair}, \eqref{astorb}. Assume $t_c\in T(\ov Y)$ is a
  crossing time and $\calE_c = \ov\calE(t_c)$ is a non-degenerate crossing
  configuration with only one crossing point.
Then there exists an interval $(t_a,t_b)$, $t_a<t_c<t_b$
 such that ${\ov d}_h\in C^1((t_a,t_b);\R)$.
\label{regevol}
\end{proposition}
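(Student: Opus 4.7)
The plan is to show that the possible jump of $\dot{\ov d}_h$ at the crossing time $t_c$ vanishes, so that $\dot{\ov d}_h$ extends continuously across $t_c$. The key algebraic fact is that the jump of $\dot{\ov Y}$ at $t_c$, given by $\mathrm{Diff}_h(\ov{\nabla_Y R})$ up to sign and the symplectic matrix, becomes (after evaluation at $\calE_c$) proportional to $\J_2 \nabla_Y \tilde{d}_h(\calE_c)$, because the first summand in \eqref{diff} is killed by $\tilde{d}_h(\calE_c)=0$. The chain rule then produces the scalar $\nabla_Y \tilde{d}_h \cdot \J_2 \nabla_Y \tilde{d}_h$, which is zero by antisymmetry of $\J_2$.

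More concretely, first I would shrink to an interval $(t_a,t_b)\ni t_c$ so small that $\barcalE(t)$ stays in the neighborhood $\cW$ of $\calE_c$ supplied by Theorem~\ref{der_jumps}, that no bifurcation or exchange of minima occurs, and that $\tau_1^{(h)}\times\tau_2^{(h)}\neq 0$ along $\barcalE(t)$, so $\tilde{d}_h$ is analytic on $\cW$. By the non-degeneracy of the crossing and the hypothesis that $\barcalE$ is not tangent to $\Sigma$ (implicit in the generalized solution being well-defined), I may assume $\barcalE(t)\in\cW^+$ on $(t_a,t_c)$ and $\barcalE(t)\in\cW^-$ on $(t_c,t_b)$, or vice versa. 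On each subinterval, $\ov Y$ is a classical $C^1$ solution of \eqref{CPleft}--\eqref{CPright}, and $\ov d_h$ is smooth there by the analyticity of $\tilde{d}_h$, so only continuity of $\dot{\ov d}_h$ at $t_c$ remains to be shown.

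Next, apply the chain rule and recall that the semimajor axis $a$ is constant in the averaged dynamics while $\ov E'(t)$ is smooth; hence
\[
\dot{\ov d}_h(t)=\nabla_Y \tilde{d}_h(\barcalE(t))\cdot \dot{\ov Y}(t)+\nabla_{E'}\tilde{d}_h(\barcalE(t))\cdot\dot{\ov E}'(t),
\]
where the second summand is continuous across $t_c$. Taking one-sided limits and using \eqref{deriv_jump} together with $\dot{\ov Y}=-\J_2 \ov{\nabla_Y R}$ on each side, the jump at $t_c$ reduces to
\[
[\dot{\ov d}_h]_{t_c}=\mp\,\nabla_Y \tilde{d}_h(\calE_c)\cdot \J_2\,\mathrm{Diff}_h\!\left(\ov{\nabla_Y R}\right)(\calE_c).
\]
Substituting \eqref{diff} at $\calE_c$, where $\tilde{d}_h(\calE_c)=0$ makes the first term disappear, yields
\[
\mathrm{Diff}_h\!\left(\ov{\nabla_Y R}\right)(\calE_c)=\frac{\mu k^2}{\pi\sqrt{\det\calA_h(\calE_c)}}\,\nabla_Y \tilde{d}_h(\calE_c),
\]
so
\[
[\dot{\ov d}_h]_{t_c}=\mp\,\frac{\mu k^2}{\pi\sqrt{\det\calA_h(\calE_c)}}\,\nabla_Y \tilde{d}_h(\calE_c)\cdot \J_2\,\nabla_Y \tilde{d}_h(\calE_c)=0,
\]
using $v\cdot\J_2 v=0$ for every $v\in\R^4$. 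Hence $\dot{\ov d}_h$ extends continuously to $t_c$ and $\ov d_h\in C^1((t_a,t_b);\R)$.

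The main subtlety is not the final algebra, which is a one-line antisymmetry argument, but rather justifying the setup: ensuring that $\tilde{d}_h$ is indeed smooth at $\calE_c$ (which requires the non-degeneracy of the crossing and $\tau_1^{(h)}\times\tau_2^{(h)}\neq 0$, so that $\det\calA_h(\calE_c)\neq 0$), and that the one-sided limits of $\dot{\ov Y}$ at $t_c$ are correctly identified with the Lipschitz extensions $(\ov{\nabla_Y R})_h^\pm$ from Theorem~\ref{der_jumps}, so that formula \eqref{deriv_jump} is indeed the jump of $\dot{\ov Y}$.
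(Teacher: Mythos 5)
Your proof is correct and follows essentially the same route as the paper: chain rule for $\dot{\ov d}_h$, the jump formula \eqref{diff} with the first summand killed by $\tilde{d}_h(\calE_c)=0$, and antisymmetry of $\mathbb{J}_2$ (which the paper phrases as the vanishing Poisson bracket $\{\tilde{d}_h,\tilde{d}_h\}_Y=0$).
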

\begin{proof}
  Let the interval $(t_a,t_b)$ be such that $\ov\calE((t_a,t_b))\subset {\cal
    W}$ , where ${\cal W}$ is chosen as in Theorem~\ref{der_jumps}.
We can assume that $\ov\calE(t)\in{\cal W}^+$ for $t\in(t_a,t_c)$,
$\ov\calE(t)\in{\cal W}^-$ for $t\in(t_c,t_b)$ (the proof for the opposite
case is similar).
For $t\in (t_a,t_b)\setminus\{t_c\}$ the time derivative of ${\ov d}_h$ is
\begin{eqnarray*}
\dot{{\ov d}}_h(t) &=&
\nabla_{\calE} \tilde{d}_h (\barcalE(t))
\cdot \dot{\barcalE}(t) =
\nabla_Y\tilde{d}_h(\barcalE(t))\cdot\dot{\ov Y}(t) +
\nabla_{E'}\tilde{d}_h(\barcalE(t))\cdot
\dot{\ov E}'(t)\\
&=&
-\left(\nabla_Y\tilde{d}_h\cdot
\mathbb{J}_2\ov{\nabla_Y R}\right)(\barcalE(t)) +
\nabla_{E'}\tilde{d}_h(\barcalE(t))\cdot
\dot{\ov E}'(t)\ .
%
%&=&
%\bigl.\{{\ov R}, \tilde{d}_h\}_Y\bigr|_{\calE=\barcalE(t)} +
%\nabla_{E'}\tilde{d}_h(\barcalE(t))\cdot
%\dot{\ov E}'(t)\ .
\end{eqnarray*}
Here $\nabla_{\calE}$, $\nabla_Y$, $\nabla_{E'}$ denote the vectors of partial
derivatives with respect to $\calE, Y, E'$ respectively.
The derivative $\dot{\ov E}'(t)$ exists also for $t=t_c$.  On the
other hand, by Theorem~\ref{der_jumps}, the restrictions of $\ov{\nabla_Y
  R}(\barcalE(t))$ to $t <t_c$ and $t>t_c$ admit two different continuous
extensions to $t_c$.  By \eqref{diff}, since $\tilde{d}_h(\barcalE(t_c)) = 0$, we have
\begin{eqnarray*}
\lim_{t\to t_c^+} \dot{\ov d}_h(t) - \lim_{t\to t_c^-} \dot{\ov d}_h(t) &=&
\left.{\rm Diff}_h\bigl(\ov{\nabla_Y R}\bigr)\cdot \mathbb{J}_2\nabla_Y
\tilde{d}_h\right|_{\calE = \calE_c} \\
&=& \frac{\mu k^2}{\pi\sqrt{\det{\cal A}_h}}
\left.\{\tilde{d}_h, \tilde{d}_h\}_Y\right|_{\calE = \calE_c} = 0\,,
\end{eqnarray*}
%\[
%{\rm Diff}_h\bigl(\{\ov R,\tilde{d}_h\}_Y\bigr)
%%
%= {\rm Diff}_h\bigl(\nabla_Y \ov R\cdot \mathbb{J}_2\nabla_Y \tilde{d}_h\bigr)
%%
%= \frac{\mu k^2}{\pi\sqrt{\det{\cal A}_h}}
%\{\tilde{d}_h, \tilde{d}_h\}_Y = 0\,,
%\]
where $\{,\}_Y$ is the Poisson bracket with respect to $Y$. Thus the time
derivative of ${\ov d}_h$ exists and is continuous also in $t=t_c$.
\end{proof}

%In Section~\ref{s:tcros}, Figure~\ref{fig:dmin},  we show the evolution of
%$\tilde{d}_h$
%along the averaged evolution $\barcalE(t)$
%for the asteroid 1979 $XB$.

% ==================================================
\break

\section{Numerical experiments}
\label{s:numexp}

% --------------------------------------------------
\subsection{The secular evolution program}

% filtered planet orbits with orbit9
Using a model with 5 planets, from Venus to Saturn, we compute a planetary
ephemerides database for a time span of $50,000$ yrs starting from epoch 0 MJD
(November 17, 1858) with a time step of $20$ yrs.  The computation is
performed using the FORTRAN program {\tt orbit9}, included in the OrbFit
free software\footnote{\url{http://adams.dm.unipi.it/~orbmaint/orbfit/}}.
%The output has been digitally filtered (see \cite{carpino}).
From this database we can obtain, by linear interpolation, the
evolution of the planetary trajectories at any time in the specified time interval.

We describe the algorithm to compute the solutions of the averaged equations
\eqref{averrhs} beyond the singularity, where $R$ is now
the sum of the perturbing functions $R_i$, $i=1\ldots 5$, each related to a
different planet.  We use a Runge-Kutta-Gauss (RKG)
method to perform the integration: it evaluates the averaged vector field only
at intermediate points of the integration time interval.  When the asteroid
trajectory is close enough to an orbit crossing, then the time step is
decreased to reach the crossing condition exactly.

From Theorem~\ref{der_jumps} we can find two Lipschitz-continuous extensions
of the averaged vector field from both sides of the singular set $\Sigma$.

To compute the solution beyond the singularity we use the explicit formula
\eqref{diff} giving the difference between the two extended vector fields,
either of which corresponds to the averaged vector field on different sides of
$\Sigma$.  We compute the intermediate values of the extended vector field
just after crossing, then we correct these values by \eqref{diff} and use them
as approximations of the averaged vector field in \eqref{averrhs} at the
intermediate points of the solutions, see Figure~\ref{fig:cross_step}.  This
RKG algorithm avoids the computation of the extended vector field at the
singular points, which may be affected by numerical instability.

A difficulty in the application of this scheme is to estimate the size of a
suitable neighborhood ${\cal W}$ of the crossing configuration $\calE_c$
fulfilling the conditions given in Section~\ref{s:extract}.
%In fact ${\cal W}$ must be chosen so that there are
%no bifurcations of minimum points, $d_k({\cal E})\geq C>0$, for $k\neq h$ and
%$\tau_1^{(h)}$,$ \tau_2^{(h)}$ are never parallel.

\begin{figure}[h!]
  \psfragscanon \psfrag{A}{$\overline{Y}_{k-1}$}
  \psfrag{B}{$\overline{Y}_k$} \psfrag{C}{$\overline{Y}_{k+1}$}
  \psfrag{Wp}{${\cal W}^+$}\psfrag{Wm}{${\cal W}^-$}
  \psfrag{sigma}{$\Sigma$} 
  \centerline{\epsfig{figure=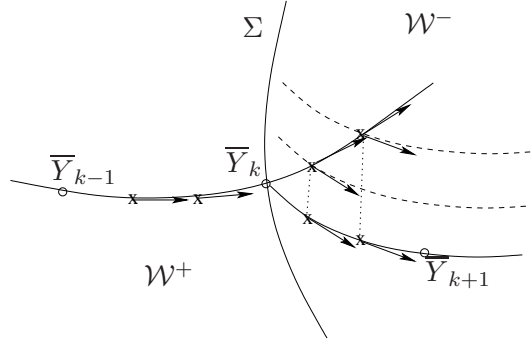,width=7cm}}
  \psfragscanoff 
   \caption{Runge-Kutta-Gauss method and continuation of the solutions
    of equations \eqref{averrhs} beyond the singularity. The crosses
    correspond to the intermediate values.}
\label{fig:cross_step}
\end{figure}

\begin{figure}[h!]
\centerline{\epsfig{figure=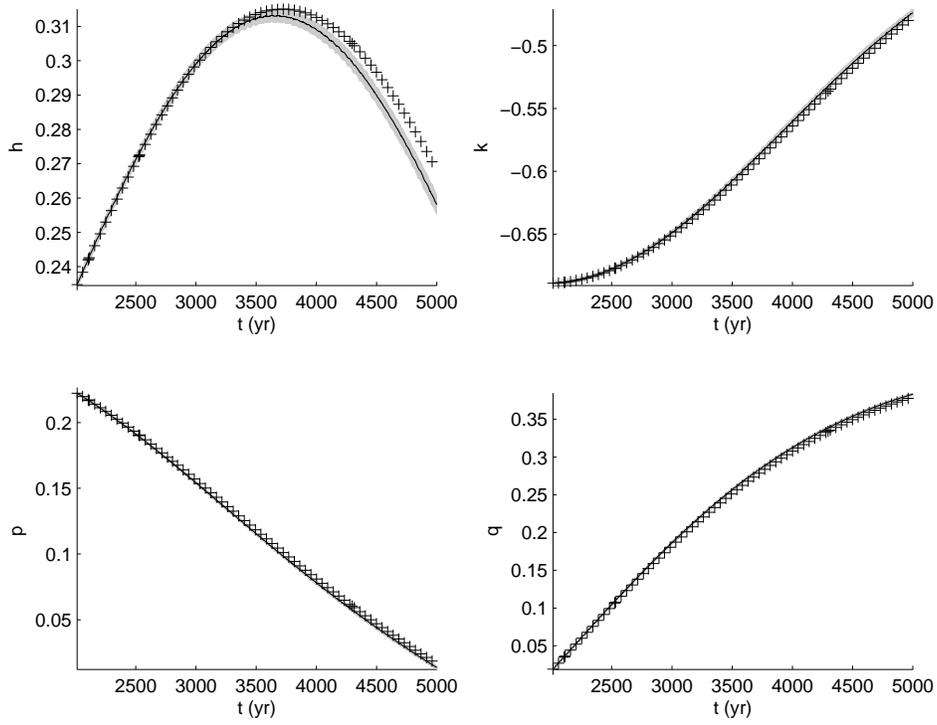,width=12.5cm}}
\caption{Averaged and non-averaged evolutions of asteroid
  $1979$~XB.}
\label{fig:evolcomp_79XB}
\end{figure}

\begin{figure}[h!]
\centerline{\epsfig{figure=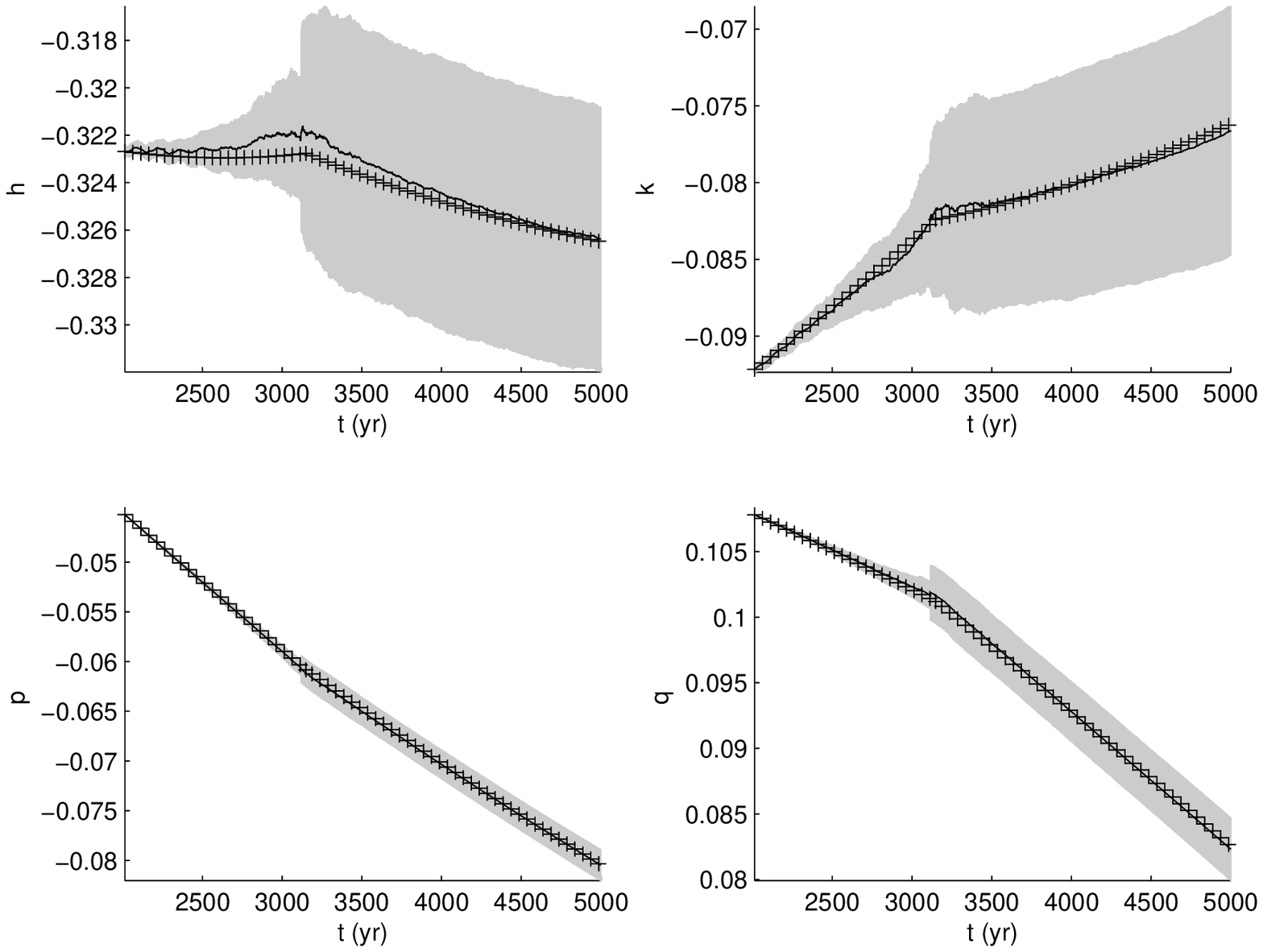,width=12.5cm}}
\caption{Averaged and non-averaged evolutions of asteroid
  1620 (Geographos).}
\label{fig:evolcomp_1620}
\end{figure}

% -------------------------------------------------------------------
\subsection{Comparison with the solutions of the full equations}
\label{s:compevol}

We performed some tests to compare the solutions of the averaged equations
\eqref{averrhs} with the corresponding components of the solutions of the full
equations \eqref{Heqs}.  Here we show two tests with the asteroids $1979$~XB
and 1620 (Geographos).  We considered the system composed by an asteroid and 5
planets, from Venus to Saturn.  We selected the 8 values $k\pi/4$, with
$k=0\ldots 7$, for the initial mean anomaly of the asteroid and the same for
the planets. Using the program {\tt orbit9}, we performed
the integration of the system with these 64 different initial conditions
(i.e. we selected equal initial phases for all the planets).
Then we considered the arithmetic mean of the four
equinoctial\footnote{We recall that
\[
h=e\sin(\omega+\Omega)\,,\quad k=e\cos(\omega+\Omega)\,,\quad
p=\tan(I/2)\sin(\Omega)\,,\quad q=\tan(I/2)\cos(\Omega)\ .
\]
The equinoctial orbital elements have been introduced in
\cite{broucke_cefola}.} orbital elements $h,k,p,q$ of the asteroid over these evolutions, and compared
them with the results of the secular evolution.  In
Figures~\ref{fig:evolcomp_79XB}, \ref{fig:evolcomp_1620}, we show the results:
the crosses indicate the secular evolution, the continuous curve
is the mean of full numerical one and the gray region represents
the standard deviation from the mean.  The correspondence between the solutions
is good.
During the evolution the distance between the asteroid and the Earth
for some initial conditions attains values of the order of
$10^{-4}$ au for $1620$ (Geographos), and $10^{-3}$ au for $1979$~XB.
In Figure~\ref{fig:evolcomp_1620} the Earth crossing singularity is
particularly evident near the epoch 3000 AD.

Some numerical tests of the theory introduced in \cite{GM98}, with the planets
on circular coplanar orbits, can be found in \cite{comparison}.

% --------------------------------------------------
\subsection{An estimate of planet crossing times}
%Transition through a planet crossing region}
\label{s:tcros}

The results of Section~\ref{s:dminevol} can be used to estimate the epoch in
which the orbit of a near-Earth asteroid will cross that of the Earth.
We are interested in particular to study the behavior of those asteroids whose
orbits will cross the Earth in the next few centuries, so that they must have a
small value of $d_{min}$ already at the present epoch.
We can consider, for example, the set of potentially hazardous asteroids
(PHAs), which have $d_{min}\leq 0.05$ au and absolute magnitude $H_{mag} \leq
22$, i.e. they are also large.

In Figure~\ref{fig:dmin} we show 3 different evolutions of the signed orbit
distance $\tilde{d}_{min}$ for the PHA $1979$~XB.  Here we
draw the full numerical (solid line), secular (dashed) and secular linearized
(dotted) evolution of $\dmint$.
\begin{figure}[h]
  \centerline{\epsfig{figure=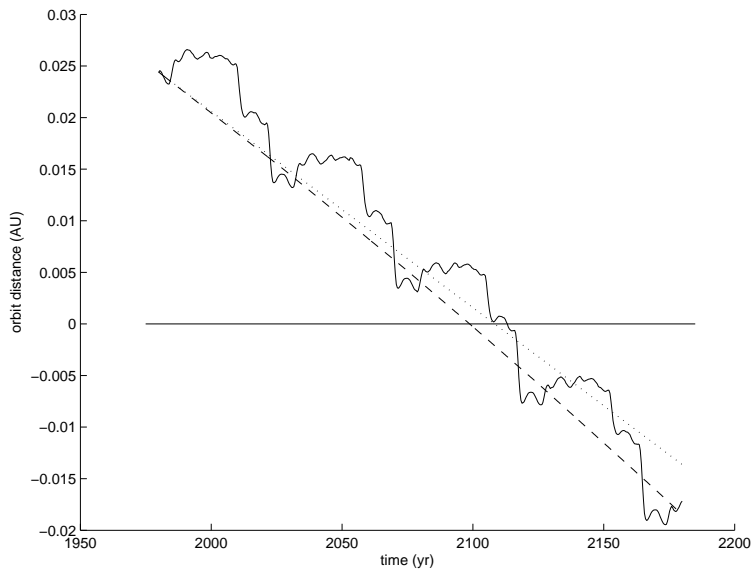,width=10cm}}
  \caption{Different evolutions of $\dmint$ for $1979$~XB: full numerical
    (solid line), secular (dashed) and secular linearized (dotted).}
  \label{fig:dmin}
\end{figure}
By Proposition~\ref{regevol} the linearization of the secular evolution
$\ov{d}_{min}(t)$ can give a good approximation also in a neighborhood of a
crossing time.

We propose a method to compute an interval $J$ of possible crossing times.  We
sample the {\em line of variation} (LOV), introduced in \cite{M&al05}, which
is a sort of `spine' of the confidence region (see also \cite{MG2010}), and
compute the signed orbit distance $\tilde{d}_{min}$ for each virtual asteroid
(VA) of the sample. Then we compute the time derivative of $\bar{d}_{min}$ for
each VA and extrapolate the crossing times by a linear approximation of the
evolution. We set $J = [t_1,t_2]$, with $t_1, t_2$ the minimum and maximum
crossing times obtained (see Figure~\ref{LOV_linear}).  In the computation of
$J$ we take into account a band centered at the Earth crossing line
$d_{min}=0$: in this test the width of the considered band is $2\times
10^{-3}$ au.
\begin{figure}[h]
  \centering
  \centerline{\epsfig{figure=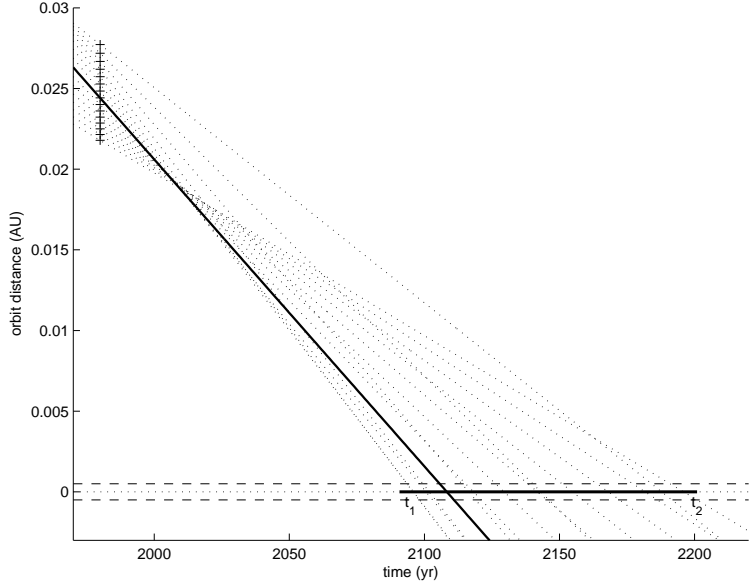,width=10cm}}
  \caption{Computation of the interval $J$ (horizontal solid line) for
    asteroid $1979$~XB. The transversal solid line corresponds to the
    linearized secular evolution of the nominal orbit. The linearized secular
    evolution of the VAs are the dotted lines.}
  \label{LOV_linear}
\end{figure}
%Note that this method deals with non-linearity effects of orbit determination
%at time $t_0$.

We describe a method to assign a probability of occurrence of crossings in a
given time interval, which is related to the algorithm described above.
For each
value of the LOV parameter $s$ we have a VA at a time $t$, so that we can
compute $\bar{d}_{min}(t)$. Thus, using the scheme of Figure~\ref{LOV_linear}
we can define a map $\mathfrak{T}$ from the LOV
parameter line to the time line.  The map $\mathfrak{T}$ gives the crossing
times, using the linearized secular dynamics, for the VAs
on the LOV that correspond to the selected values of the parameter $s$.
Moreover, we have a
probability density function $p(s)$ on the LOV.
%so that we can compute the probability of a (measurable) subset $U$ of VAs th
%\[
%P(U) = \int_U p(s)\;ds;
%\]
Therefore, given an interval $I$ in the time line, we can consider the set
$U_I = \mathfrak{T}^{-1}(I)$ and define the probability of having a crossing
in the time interval $I$ as
\[
P(I) = \int_{U_I}p(s)\;ds\ .
\]

Finally, in Figure~\ref{confronto} we show the corresponding interval $J'$
obtained by computing the secular evolution (without linearization) of the
orbit distance for each VA of $1979$~XB. The sizes of $J$ and $J'$ are almost
equal, but the left extremum of $J'$ is $\sim 10$ years before.
\begin{figure}[h]
  \centering
  \psfragscanon
  \psfrag{t}{$t$}\psfrag{d}{$d$}
  \centerline{\epsfig{figure=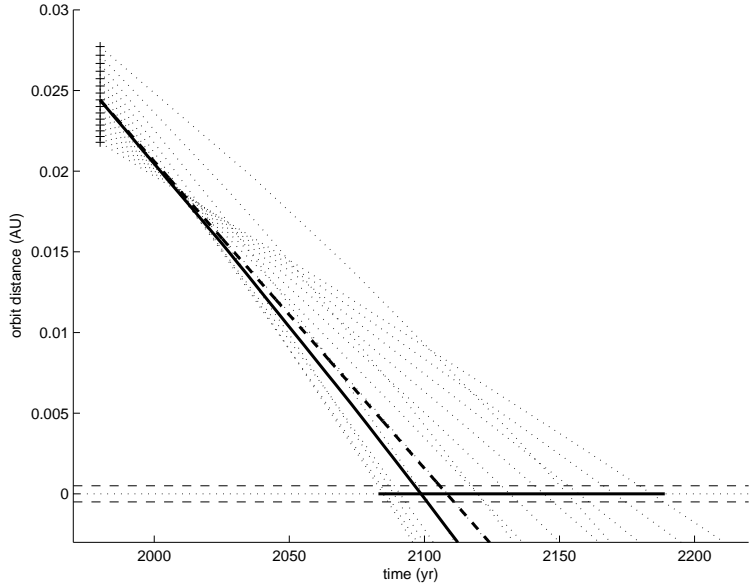,width=10cm}}
  \psfragscanoff
  \caption{Computation of the interval $J'$ (horizontal solid line) for
    asteroid $1979$~XB. The enhanced transversal curves
    refer to the nominal orbit: solid line corresponds to secular evolution,
    linearized is dashed. The dotted curves represent the secular evolution of
    the VAs.}
  \label{confronto}
\end{figure}

%To know if the orbit of an asteroid is approaching the orbit of the Earth or
%not we should filter the short periodic perturbations

% ===============================================================
\section{Conclusions and future work}

We have studied the double averaged restricted 3-body problem in case
of orbit crossing singularities, improving and completing the results in
\cite{GM98}, \cite{gronchi_celmecIII}.  This
problem is of interest to study the dynamics of near-Earth asteroids
from a statistical point of view, going beyond the Lyapounov times of
their orbits.  We have also proved that generically, in a neighborhood
of a crossing time, the secular evolution of the (signed) orbit distance
is more regular than the averaged evolution of the orbital elements.

The solutions of this averaged problem have been computed by a numerical
method and then compared with the solutions of the full equations in a few
test cases. The results were good enough; however, we expect that the
averaging technique fails in case of mean motion resonances or close
encounters with a planet. We plan to perform numerical experiments with a
large sample of near-Earth asteroids showing different behaviors: this will be
useful to understand the applicability of the averaging technique to the whole
set of NEAs.

% ===============================================================
\section*{Acknowledgments}
We would like to thank an anonymous referee for his interesting comments, that
allowed us to improve the paper.

\end{document}